\newtheorem{theorem}{Theorem}
\theoremstyle{plain}
\newtheorem{corollary}{Corollary}
\newtheorem{definition}{Definition}
\newtheorem{lemma}{Lemma}
\newtheorem{proposition}{Proposition}
\newtheorem{remark}{Remark}
\numberwithin{equation}{section}
\begin{document}
\title[inequalities for $s-$convex functions]{New inequalities of
Ostrowski's type for $s-$convex functions in the second sense with
applications}
\author{Erhan SET$^{\star \clubsuit }$}
\address{$^{\clubsuit }$Atat\"{u}rk University, K.K. Education Faculty,
Department of Mathematics, 25240, Campus, Erzurum, Turkey}
\email{erhanset@yahoo.com}
\thanks{$^{\star }$corresponding author}
\author{M. Emin \"{O}zdemir$^{\blacksquare }$}
\address{$^{\blacksquare }$Graduate School of Natural and Applied Sciences, A%
\u{g}r\i\ \.{I}brahim \c{C}e\c{c}en University, A\u{g}r\i , Turkey}
\email{emos@atauni.edu.tr}
\author{Mehmet Zeki Sar\i kaya$^{\blacklozenge }$}
\address{$^{\blacklozenge }$Department of Mathematics,Faculty of Science and
Arts, D\"{u}zce University, D\"{u}zce, Turkey}
\email{sarikayamz@gmail.com}
\date{}
\subjclass[2000]{ 26A51, 26D10.}
\keywords{Ostrowski's inequality ,convex function, $s-$convex function,
special means, midpoint formula. }

\begin{abstract}
In this paper, we establish some new inequalities of Ostrowski's type for
functions whose derivatives in absolute value are the class of s-convex.
Some applications for special means of real numbers are also provided.
Finally, some error estimates for the midpoint formula are obtained.
\end{abstract}

\maketitle

\section{Introduction}

The following result is known in the literature as Ostrowski's inequality 
\cite{AO}

\begin{theorem}
Let $f:I\subset \left[ 0,\infty \right] \rightarrow 
\mathbb{R}
$ be a differentiable mapping on $I^{\circ }$, the interior of the interval $%
I$ , such that $f^{\prime }\in L\left[ a,b\right] $ where $a$ $,$ $b\in I$
with $a<b$ . If $\left\vert f^{\prime }\left( x\right) \right\vert \leq M$,
then the following inequality holds:
\end{theorem}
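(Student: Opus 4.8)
The plan is to establish the Ostrowski bound
\[
\left\vert f(x)-\frac{1}{b-a}\int_{a}^{b}f(t)\,dt\right\vert \leq \frac{M}{b-a}\left[ \frac{(x-a)^{2}+(b-x)^{2}}{2}\right]
\]
for every $x\in \lbrack a,b]$, which can afterwards be recast in the equivalent compact form $\left[ \frac14+\frac{(x-\frac{a+b}{2})^{2}}{(b-a)^{2}}\right](b-a)M$. The engine of the argument is the Montgomery identity. I would introduce the Peano kernel
\[
p(x,t)=\begin{cases} t-a, & a\leq t\leq x,\\ t-b, & x<t\leq b,\end{cases}
\]
and first prove that
\[
f(x)-\frac{1}{b-a}\int_{a}^{b}f(t)\,dt=\frac{1}{b-a}\int_{a}^{b}p(x,t)f^{\prime }(t)\,dt.
\]

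To verify this identity I would split the right-hand integral at $t=x$ and integrate each piece by parts. On $[a,x]$ the factor $t-a$ differentiates to $1$, producing the boundary contribution $(x-a)f(x)$ together with $-\int_{a}^{x}f(t)\,dt$; on $[x,b]$ the factor $t-b$ yields $(b-x)f(x)$ together with $-\int_{x}^{b}f(t)\,dt$. The boundary terms sum to $(b-a)f(x)$ while the two integral terms combine to $-\int_{a}^{b}f(t)\,dt$, and dividing through by $b-a$ gives exactly the claimed identity. The hypothesis $f^{\prime }\in L[a,b]$ is precisely what legitimises these integrations by parts.

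Next I would take absolute values in the identity and apply the uniform bound $|f^{\prime }(t)|\leq M$ to obtain
\[
\left\vert f(x)-\frac{1}{b-a}\int_{a}^{b}f(t)\,dt\right\vert \leq \frac{M}{b-a}\int_{a}^{b}\left\vert p(x,t)\right\vert \,dt.
\]
The remaining task is the elementary kernel computation: since $t-a\geq 0$ on $[a,x]$ while $t-b\leq 0$ on $[x,b]$, one has $\int_{a}^{b}|p(x,t)|\,dt=\int_{a}^{x}(t-a)\,dt+\int_{x}^{b}(b-t)\,dt=\frac{(x-a)^{2}}{2}+\frac{(b-x)^{2}}{2}$, which delivers the stated bound; a short completion of the square in $x$ then converts it into the $\frac14$-form.

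I do not expect a genuine obstacle, since this is the classical Ostrowski inequality. The single point requiring care is the sign bookkeeping in the integration-by-parts step that produces the Montgomery identity — in particular, checking that the two boundary terms arising at the interior point $x$ \emph{add} rather than cancel, so as to yield the coefficient $b-a$ in front of $f(x)$.
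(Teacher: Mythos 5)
Your proof is correct, but note that the paper itself offers no proof of this statement: it is the classical Ostrowski inequality, quoted verbatim from Ostrowski's 1938 paper \cite{AO} as background, so there is no internal argument to compare against. What you have written is the standard Montgomery-identity proof, and it is worth observing that your kernel identity is exactly the paper's Lemma \ref{l.1.1} in disguise: substituting $u=ta+(1-t)b$, so that $t=\frac{b-u}{b-a}$, turns the paper's kernel $p(t)$ (equal to $t$ on $\left[0,\frac{b-x}{b-a}\right]$ and $t-1$ afterwards) into your $\frac{1}{b-a}p(x,u)$, and the factor $(a-b)$ absorbs the orientation reversal of the integral. Thus your argument in effect supplies the omitted proof using precisely the tool the authors use for all of their main results, specialized to the bound $|f'|\leq M$ instead of an $s$-convexity estimate. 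Your sign bookkeeping is right: the boundary terms at the interior point $x$ contribute $(x-a)f(x)$ and $(b-x)f(x)$ and add to $(b-a)f(x)$, the kernel integral is $\frac{(x-a)^{2}}{2}+\frac{(b-x)^{2}}{2}$, and the conversion to the stated form follows from
\begin{equation*}
(x-a)^{2}+(b-x)^{2}=\frac{(b-a)^{2}}{2}+2\left( x-\frac{a+b}{2}\right)^{2},
\end{equation*}
which yields $M(b-a)\left[ \frac{1}{4}+\frac{\left( x-\frac{a+b}{2}\right)^{2}}{(b-a)^{2}}\right]$ exactly. The one point you gesture at but could state explicitly is the legitimacy of integration by parts: since $f$ is differentiable with $|f'|\leq M$, it is Lipschitz, hence absolutely continuous, so the fundamental theorem of calculus applies and there is no regularity gap.
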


\begin{equation}
\left\vert f(x)-\frac{1}{b-a}\int_{a}^{b}f(u)du\right\vert \leq M\left(
b-a\right) \left[ \frac{1}{4}+\frac{\left( x-\frac{a+b}{2}\right) ^{2}}{%
\left( b-a\right) ^{2}}\right]  \label{e.1.1}
\end{equation}

Recently, Ostrowski's inequality has been the subject of intensive research.
In particular, many generalizations , improvements , and applications for
the Ostrowski's inequality can be found in the literature (\cite{AD}-\cite%
{BCDPS},\cite{DS}-\cite{DW1},\cite{AO} and \cite{sarikaya}) and the
references therein.

In \cite{AD}, Alomari and Darus obtained inequalities for differentiable
convex mappings which are connected with Ostrowski's inequality, and they
used the following lemma to prove them. We have corrected by writting $%
\left( a-b\right) $ instead of $\left( b-a\right) $ in the right side of
this lemma.

\begin{lemma}
\label{l.1.1} Let $f:I\subset \mathbb{R}\rightarrow \mathbb{R}$ be a
differentiable mapping on $I^{\circ }\ $where $a,b\in I$ with $a<b.$ If $%
f^{\prime }\in L\left[ a,b\right] ,$ then the following equality holds:%
\begin{equation}
f(x)-\frac{1}{b-a}\int_{a}^{b}f(u)du=\left( a-b\right)
\int_{0}^{1}p(t)f^{\prime }(ta+(1-t)b)dt  \label{e.1.2}
\end{equation}%
for each $t\in \left[ 0,1\right] ,$ where%
\begin{equation*}
p\left( t\right) =\QATOPD\{ . {t,\text{ \ \ \ \ \ \ \ \ \ \ \ \ }t\in \left[
0,\frac{b-x}{b-a}\right] }{t-1,\text{ \ \ \ \ \ \ }t\in \left( \frac{b-x}{b-a%
},1\right] },
\end{equation*}%
for all $x\in \left[ a,b\right] .$
\end{lemma}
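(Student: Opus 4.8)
The plan is to verify the identity by evaluating the right-hand side directly through integration by parts. The key observation is that if I set $G(t)=f(ta+(1-t)b)$, then $G'(t)=(a-b)f'(ta+(1-t)b)$, so the external factor $(a-b)$ is exactly absorbed and
\[
(a-b)\int_0^1 p(t)f'(ta+(1-t)b)\,dt=\int_0^1 p(t)G'(t)\,dt .
\]
I would then split this integral at the point $t=\frac{b-x}{b-a}$ according to the piecewise definition of $p$, writing it as $\int_0^{\frac{b-x}{b-a}} t\,G'(t)\,dt+\int_{\frac{b-x}{b-a}}^1 (t-1)\,G'(t)\,dt$.

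Next I would apply integration by parts to each piece. In the first integral, with the choice $u=t$, the boundary term vanishes at $t=0$ and contributes $\frac{b-x}{b-a}\,G\!\left(\frac{b-x}{b-a}\right)$ at the upper limit; crucially, $ta+(1-t)b=x$ when $t=\frac{b-x}{b-a}$, so $G\!\left(\frac{b-x}{b-a}\right)=f(x)$. In the second integral, with $u=t-1$, the boundary term vanishes at $t=1$ and contributes $-\frac{a-x}{b-a}\,f(x)$ at the split point, since $t-1=\frac{a-x}{b-a}$ there. The coefficients of $f(x)$ are therefore $\frac{b-x}{b-a}$ and $-\frac{a-x}{b-a}$, whose sum is exactly $1$, so the boundary contributions collapse to $f(x)$, while the two leftover integrals reassemble into $-\int_0^1 G(t)\,dt$.

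Finally, the substitution $u=ta+(1-t)b$ (with $du=(a-b)\,dt$, and $t=0\mapsto u=b$, $t=1\mapsto u=a$) converts $\int_0^1 G(t)\,dt=\int_0^1 f(ta+(1-t)b)\,dt$ into $\frac{1}{b-a}\int_a^b f(u)\,du$, which yields the claimed equality. I do not anticipate a genuine obstacle here, since the argument is a routine integration-by-parts computation; the only points demanding care are tracking the sign of the prefactor (this is precisely the $(a-b)$-versus-$(b-a)$ correction the authors flag in the statement) and checking that the split point $t=\frac{b-x}{b-a}$ maps to $u=x$, so that the two boundary terms share the common factor $f(x)$ and combine correctly.
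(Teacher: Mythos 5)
Your proof is correct and follows exactly the expected route: the paper states Lemma \ref{l.1.1} without proof (importing it from Alomari and Darus), and the underlying argument is precisely your integration by parts of $\int_0^{t_0}t\,G'(t)\,dt$ and $\int_{t_0}^{1}(t-1)\,G'(t)\,dt$ at the split point $t_0=\frac{b-x}{b-a}$, where $G(t_0)=f(x)$ and the boundary coefficients $\frac{b-x}{b-a}$ and $-\frac{a-x}{b-a}$ sum to $1$. Your careful sign-tracking also independently confirms that the prefactor must be $(a-b)$, i.e.\ exactly the correction the authors flag when quoting the lemma.
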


\begin{definition}
\cite{breckner} A function $f:[0,\infty )\mathbb{\rightarrow R}$ is said to
be $s-$convex in the second sense if 
\begin{equation*}
f(\alpha x+(1-\alpha )y)\leq \alpha ^{s}f(x)+(1-\alpha )^{s}f(y)
\end{equation*}%
for all $x,y\in \lbrack 0,\infty )$, $\alpha \in \lbrack 0,1]$ and for some
fixed $s\in (0,1].$ This class of $s$-convex functions is usually denoted by 
$K_{s}^{2}.$
\end{definition}

An $s-$convex function was introduced in Breckner's paper \cite{breckner}
and a number of properties and connections with $s-$convexity in the first
sense are discussed in paper \cite{hudzik}. Of course, $s-$convexity means
just convexity when $s=1.$

In \cite{dragomir1}, Dragomir and Fitzpatrick proved a variant of Hadamard's
inequality which holds for $s-$convex functions in the second sense:

\begin{theorem}
Suppose that $f:[0,\infty )\rightarrow \lbrack 0,\infty )$ is an $s$-convex
function in the second sense, where $s\in (0,1),$ and let $a,b\in \lbrack
0,\infty ),$ $a<b.$ If $f\in L^{1}(\left[ a,b\right] ),$ then the following
inequalities hold:%
\begin{equation}
2^{s-1}f(\frac{a+b}{2})\leq \frac{1}{b-a}\int\limits_{a}^{b}f(x)dx\leq \frac{%
f(a)+f(b)}{s+1}.  \label{e.1.3}
\end{equation}
\end{theorem}

The constant $k=\frac{1}{s+1}$ is the best possible in the second inequality
in (\ref{e.1.3}).

In \cite{ADDC}, Alomari et al. proved the following inequality of Ostrowski
type for functions whose derivative in absolute value are $s-$convex in the
second sense.

\begin{theorem}
Let $f:I\subset \lbrack 0,\infty )\rightarrow \mathbb{R}$ be a
differentiable mapping on $I^{\circ }$ such that $f^{\prime }\in L\left[ a,b%
\right] $, where $a,b\in I$ with $a<b.$ If $\left\vert f^{\prime
}\right\vert ^{q}$ is $s-$convex in the second sense on $\left[ a,b\right] $
for some fixed $s\in \left( 0,1\right] $, $p,q>1$, $\frac{1}{p}+\frac{1}{q}%
=1 $ and $\left\vert f^{\prime }(x)\right\vert \leq M$, $x\in \left[ a,b%
\right] $, then the following inequality holds:%
\begin{equation}
\left\vert f(x)-\frac{1}{b-a}\int_{a}^{b}f(u)du\right\vert \leq \frac{M}{%
\left( 1+p\right) ^{\frac{1}{p}}}\left( \frac{2}{s+1}\right) ^{\frac{1}{q}%
}\left\{ \frac{\left( x-a\right) ^{2}+\left( b-x\right) ^{2}}{\left(
b-a\right) }\right\}  \label{EE}
\end{equation}%
for each $x\in \left[ a,b\right] $.
\end{theorem}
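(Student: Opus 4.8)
The plan is to start from the integral identity in Lemma~\ref{l.1.1} and estimate its right-hand side. Writing $t_0=\frac{b-x}{b-a}$, I would take absolute values in \eqref{e.1.2}, pull out $|a-b|=b-a$, and use the triangle inequality together with the piecewise description of $p(t)$ (noting $|t-1|=1-t$ on $[0,1]$) to get
\begin{equation*}
\left| f(x)-\frac{1}{b-a}\int_a^b f(u)\,du \right| \leq (b-a)\left[ \int_0^{t_0} t\,|f'(ta+(1-t)b)|\,dt + \int_{t_0}^1 (1-t)\,|f'(ta+(1-t)b)|\,dt \right].
\end{equation*}

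Next I would apply H\"older's inequality with exponents $p$ and $q$ to each of the two integrals separately, splitting off the weight $t$ (respectively $1-t$) from the factor $|f'(ta+(1-t)b)|$. This produces the power integrals $\int_0^{t_0}t^p\,dt=\frac{t_0^{p+1}}{p+1}$ and $\int_{t_0}^1(1-t)^p\,dt=\frac{(1-t_0)^{p+1}}{p+1}$, whose $\tfrac1p$-th roots account for the constant $(1+p)^{-1/p}$ in the claimed bound, while the remaining factors are the $q$-th roots of $\int |f'(ta+(1-t)b)|^q\,dt$ over each subinterval.

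The heart of the argument is the estimate of these $q$-integrands. Since $|f'|^q$ is $s$-convex in the second sense and $|f'|\leq M$, I would bound
\begin{equation*}
|f'(ta+(1-t)b)|^q \leq t^s|f'(a)|^q+(1-t)^s|f'(b)|^q \leq M^q\bigl(t^s+(1-t)^s\bigr).
\end{equation*}
Because the integrand is nonnegative, the integral over either subinterval is dominated by the integral over all of $[0,1]$, and $\int_0^1\bigl(t^s+(1-t)^s\bigr)\,dt=\frac{2}{s+1}$; this is precisely the source of the factor $\left(\frac{2}{s+1}\right)^{1/q}$.

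Finally I would reassemble the two estimates, factor out $(b-a)$, and substitute $t_0=\frac{b-x}{b-a}$ and $1-t_0=\frac{x-a}{b-a}$. The main obstacle I anticipate lies here: the power integrals contribute $t_0^{(p+1)/p}$ and $(1-t_0)^{(p+1)/p}$, so the natural outcome is a bound proportional to $\frac{(b-x)^{1+1/p}+(x-a)^{1+1/p}}{(b-a)^{1/p}}$, in which the exponents are $1+\tfrac1p$ and the denominator carries the power $\tfrac1p$. Reconciling this with the stated factor $\frac{(x-a)^2+(b-x)^2}{b-a}$, whose exponents are $2$ and $1$, is the delicate point, and is exactly where the bookkeeping of the exponents must be scrutinized most carefully rather than any conceptual difficulty arising.
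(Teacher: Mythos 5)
There is a genuine gap here, and you have in fact located it yourself: the exponent mismatch at the end is not a matter of bookkeeping but a real loss incurred at one specific earlier step. When you enlarge $\int_0^{t_0}\left\vert f'(ta+(1-t)b)\right\vert^q dt$ to $\int_0^1 M^q\left(t^s+(1-t)^s\right)dt=\frac{2M^q}{s+1}$, you discard the length of the interval of integration, that is, a factor $t_0$ whose $q$-th root is exactly what the target inequality needs: since $\frac{p+1}{p}+\frac{1}{q}=1+\frac{1}{p}+\frac{1}{q}=2$, the product $t_0^{(p+1)/p}\cdot t_0^{1/q}=t_0^{2}$ is the sole source of the quadratic terms $(b-x)^2$ and $(x-a)^2$ in (\ref{EE}). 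Without that factor your assembled bound is proportional to $u^{1+1/p}+v^{1+1/p}$ with $u=\frac{b-x}{b-a}$, $v=\frac{x-a}{b-a}$, $u+v=1$; but since $1+\frac{1}{p}<2$ and $0\le u,v\le 1$, one has $u^{1+1/p}+v^{1+1/p}\geq u^{2}+v^{2}$ (at $x=\frac{a+b}{2}$ this reads $2^{-1/p}>\frac{1}{2}$), so your estimate is strictly \emph{weaker} than (\ref{EE}) and no rearrangement of exponents can reconcile the two.

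The repair is to exploit the $s$-convexity of $\left\vert f'\right\vert^{q}$ relative to the endpoints of the actual range of integration rather than relative to $a$ and $b$. Substituting $t=t_0\tau$ gives $ta+(1-t)b=\tau x+(1-\tau)b$, hence
\begin{equation*}
\int_0^{t_0}\left\vert f'(ta+(1-t)b)\right\vert^q dt=t_0\int_0^1\left\vert f'(\tau x+(1-\tau)b)\right\vert^q d\tau\leq t_0\,\frac{\left\vert f'(x)\right\vert^q+\left\vert f'(b)\right\vert^q}{s+1}\leq t_0\,\frac{2M^q}{s+1},
\end{equation*}
and symmetrically on $[t_0,1]$ with endpoints $a$ and $x$. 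This is precisely how the paper proceeds: it proves Theorem \ref{t.2.1} by applying the right-hand Hermite--Hadamard inequality (\ref{e.1.3}) of Dragomir and Fitzpatrick to $\left\vert f'\right\vert^q$ on the subintervals $[x,b]$ and $[a,x]$ (estimates (\ref{e.2.1}) and (\ref{e.2.2})), which keeps the lengths $t_0$ and $1-t_0$, yields the bound $\frac{1}{(b-a)(p+1)^{1/p}}\left\{(b-x)^2\left(\frac{\left\vert f'(x)\right\vert^q+\left\vert f'(b)\right\vert^q}{s+1}\right)^{1/q}+(x-a)^2\left(\frac{\left\vert f'(a)\right\vert^q+\left\vert f'(x)\right\vert^q}{s+1}\right)^{1/q}\right\}$, and then recaptures (\ref{EE}) in a remark by inserting $\left\vert f'\right\vert\leq M$. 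Everything in your proposal up to the flawed step --- Lemma \ref{l.1.1}, the split at $t_0$, H\"{o}lder with the weights $t$ and $1-t$, and the power integrals producing $(p+1)^{-1/p}$ --- coincides with the paper's argument.
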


In \cite{USK}, some inequalities of Hermite-Hadamard's type for
differentiable convex mappings were presented as follows:

\begin{theorem}
Let $f:I\subset \mathbb{R}\rightarrow \mathbb{R}$ be a differentiable
mapping on $I^{\circ },\ $where $a,b\in I$ with $a<b.$ If $\left\vert
f^{\prime }\right\vert $ is convex on $[a,b],$ then the following inequality
holds,%
\begin{equation}
\left\vert \frac{1}{b-a}\int_{a}^{b}f(x)dx-f\left( \frac{a+b}{2}\right)
\right\vert \leq \dfrac{\left( b-a\right) }{4}\left[ \frac{\left\vert
f^{\prime }(a)\right\vert +\left\vert f^{\prime }(b)\right\vert }{2}\right] .
\label{e.1.4}
\end{equation}
\end{theorem}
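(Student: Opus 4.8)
The plan is to reduce the statement to the identity already furnished by Lemma \ref{l.1.1}, specializing the free point $x$ to the midpoint $\frac{a+b}{2}$. With this choice one has $\frac{b-x}{b-a}=\frac{1}{2}$, so the kernel $p$ appearing in \eqref{e.1.2} collapses to the symmetric sawtooth
\begin{equation*}
p(t)=\begin{cases} t, & t\in\left[0,\tfrac{1}{2}\right],\\ t-1, & t\in\left(\tfrac{1}{2},1\right],\end{cases}
\end{equation*}
and Lemma \ref{l.1.1} gives
\begin{equation*}
f\!\left(\frac{a+b}{2}\right)-\frac{1}{b-a}\int_a^b f(u)\,du=(a-b)\int_0^1 p(t)\,f'(ta+(1-t)b)\,dt .
\end{equation*}
Taking absolute values and moving them inside the integral (so that $|a-b|=b-a$) produces the starting bound
\begin{equation*}
\left\vert \frac{1}{b-a}\int_a^b f(u)\,du-f\!\left(\frac{a+b}{2}\right)\right\vert \le (b-a)\int_0^1 |p(t)|\,\bigl\vert f'(ta+(1-t)b)\bigr\vert \,dt .
\end{equation*}

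Next I would invoke the convexity of $|f'|$ in the form $|f'(ta+(1-t)b)|\le t|f'(a)|+(1-t)|f'(b)|$, which splits the right-hand side into the two weighted moments $\int_0^1 |p(t)|\,t\,dt$ and $\int_0^1 |p(t)|(1-t)\,dt$. Since $|p(t)|$ equals $t$ on $[0,\tfrac12]$ and $1-t$ on $(\tfrac12,1]$, it is symmetric about $t=\tfrac12$, so the substitution $t\mapsto 1-t$ shows these two moments coincide; hence it suffices to evaluate one of them.

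The only genuine computation is this moment. Splitting the integral at $t=\tfrac12$,
\begin{equation*}
\int_0^1 |p(t)|\,t\,dt=\int_0^{1/2} t^2\,dt+\int_{1/2}^1 (1-t)t\,dt=\frac{1}{24}+\frac{1}{12}=\frac{1}{8},
\end{equation*}
and by the symmetry just noted $\int_0^1 |p(t)|(1-t)\,dt=\tfrac18$ as well. Substituting back yields
\begin{equation*}
\left\vert \frac{1}{b-a}\int_a^b f(u)\,du-f\!\left(\frac{a+b}{2}\right)\right\vert \le (b-a)\left[\frac{1}{8}|f'(a)|+\frac{1}{8}|f'(b)|\right]=\frac{b-a}{4}\left[\frac{|f'(a)|+|f'(b)|}{2}\right],
\end{equation*}
which is precisely \eqref{e.1.4}. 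I do not expect a real obstacle here: the whole argument is driven by the specialized identity, and the only place that demands care is handling the piecewise kernel when computing the moment (and exploiting its symmetry about $t=\tfrac12$ so that the second moment need not be evaluated separately).
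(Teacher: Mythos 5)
Your proof is correct and follows essentially the same route as the paper: the paper derives (\ref{e.1.4}) by proving Theorem \ref{t.2.0} from Lemma \ref{l.1.1} together with the pointwise ($s$-)convexity bound on $\left\vert f^{\prime }\right\vert$ and then specializing $x=\frac{a+b}{2}$ and $s=1$ (Corollary \ref{cor1} and Remark \ref{R.ek1}), while you carry out the identical decomposition and moment computation specialized to the midpoint from the outset. Your symmetry observation that $\int_{0}^{1}\left\vert p(t)\right\vert t\,dt=\int_{0}^{1}\left\vert p(t)\right\vert (1-t)\,dt=\frac{1}{8}$ is a minor streamlining, not a different method.
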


\begin{theorem}
Let $f:I\subset \mathbb{R}\rightarrow \mathbb{R}$ be a differentiable
mapping on $I^{\circ },\ $where $a,b\in I^{\circ }$ with $a<b,$ and let $%
p>1. $ If the mapping $\left\vert f^{\prime }\right\vert ^{p/(p-1)}$ is
convex on $[a,b],$ then we have%
\begin{eqnarray}
&&\left\vert \frac{1}{b-a}\int_{a}^{b}f(x)dx-f\left( \frac{a+b}{2}\right)
\right\vert  \label{e.1.5} \\
&\leq &\dfrac{\left( b-a\right) }{16}\left( \frac{4}{p+1}\right) ^{\frac{1}{p%
}}\left[ \left( \left\vert f^{\prime }(a)\right\vert ^{p/(p-1)}+3\left\vert
f^{\prime }(b)\right\vert ^{p/(p-1)}\right) ^{(p-1)/p}\right.  \notag \\
&&+\left. \left( 3\left\vert f^{\prime }(a)\right\vert ^{p/(p-1)}+\left\vert
f^{\prime }(b)\right\vert ^{p/(p-1)}\right) ^{(p-1)/p}\right] .  \notag
\end{eqnarray}
\end{theorem}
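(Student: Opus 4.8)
The plan is to specialize the integral representation of Lemma \ref{l.1.1} to the midpoint $x=\frac{a+b}{2}$ and then to combine H\"{o}lder's inequality with the convexity of $\left\vert f^{\prime }\right\vert ^{p/(p-1)}$. First I would put $x=\frac{a+b}{2}$ in Lemma \ref{l.1.1}. The breakpoint $\frac{b-x}{b-a}$ then equals $\frac{1}{2}$, so the kernel reduces to $p(t)=t$ on $\left[ 0,\frac{1}{2}\right] $ and $p(t)=t-1$ on $\left( \frac{1}{2},1\right] $, and \eqref{e.1.2} becomes
\[
f\Big(\frac{a+b}{2}\Big)-\frac{1}{b-a}\int_{a}^{b}f(u)\,du=(a-b)\int_{0}^{1}p(t)f^{\prime }(ta+(1-t)b)\,dt.
\]
Taking absolute values, using $\left\vert p(t)\right\vert =t$ on $\left[ 0,\frac{1}{2}\right] $ and $\left\vert p(t)\right\vert =1-t$ on $\left( \frac{1}{2},1\right] $, and splitting the integral at $t=\frac{1}{2}$ yields
\[
\Big\vert \frac{1}{b-a}\int_{a}^{b}f(u)\,du-f\Big(\frac{a+b}{2}\Big)\Big\vert \leq (b-a)\Big[ \int_{0}^{1/2}t\,\big\vert f^{\prime }(ta+(1-t)b)\big\vert \,dt+\int_{1/2}^{1}(1-t)\,\big\vert f^{\prime }(ta+(1-t)b)\big\vert \,dt\Big] .
\]

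Next I would apply H\"{o}lder's inequality to each of the two integrals with conjugate exponents $p$ and $q=\frac{p}{p-1}$, pairing the weight ($t$ or $1-t$) with $\left\vert f^{\prime }\right\vert $. For the first integral this separates the factor $\big(\int_{0}^{1/2}t^{p}\,dt\big)^{1/p}=\big(2^{-(p+1)}/(p+1)\big)^{1/p}$ from $\big(\int_{0}^{1/2}\left\vert f^{\prime }(ta+(1-t)b)\right\vert ^{q}\,dt\big)^{1/q}$, and symmetrically for the second. I would then invoke the convexity of $\left\vert f^{\prime }\right\vert ^{q}$ to bound $\left\vert f^{\prime }(ta+(1-t)b)\right\vert ^{q}\leq t\left\vert f^{\prime }(a)\right\vert ^{q}+(1-t)\left\vert f^{\prime }(b)\right\vert ^{q}$ and evaluate the elementary integrals $\int_{0}^{1/2}t\,dt=\frac{1}{8}$, $\int_{0}^{1/2}(1-t)\,dt=\frac{3}{8}$ (and their counterparts on $\left[ \frac{1}{2},1\right] $). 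This produces the bounds $\frac{1}{8}\big(\left\vert f^{\prime }(a)\right\vert ^{q}+3\left\vert f^{\prime }(b)\right\vert ^{q}\big)$ and $\frac{1}{8}\big(3\left\vert f^{\prime }(a)\right\vert ^{q}+\left\vert f^{\prime }(b)\right\vert ^{q}\big)$, whose $1/q=(p-1)/p$ powers are exactly the bracketed terms appearing in \eqref{e.1.5}.

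Finally I would collect the multiplicative constants, which is the one delicate point, since the obstacle here is computational rather than conceptual. Combining the H\"{o}lder factor $2^{-(p+1)/p}(p+1)^{-1/p}$ with the convexity factor $\big(\frac{1}{8}\big)^{1/q}=2^{-3(p-1)/p}$ gives a total power of $2$ equal to $-\frac{p+1}{p}-\frac{3(p-1)}{p}=-4+\frac{2}{p}$; factoring out $2^{-4}=\frac{1}{16}$ leaves $2^{2/p}=4^{1/p}$, which combines with $(p+1)^{-1/p}$ into $\frac{1}{16}\big(\frac{4}{p+1}\big)^{1/p}$. Multiplying by the symmetric pair of bracketed terms then reproduces \eqref{e.1.5} exactly, so the main work lies in carrying the exponents through carefully so that the stray powers of $2$ collapse into the clean factor $\frac{1}{16}\big(\frac{4}{p+1}\big)^{1/p}$.
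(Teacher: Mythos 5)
Your proof is correct: the constant bookkeeping checks out ($2^{-(p+1)/p}\cdot 2^{-3(p-1)/p}=2^{-4+2/p}=\frac{1}{16}\,4^{1/p}$), and the argument — Lemma \ref{l.1.1} split at the breakpoint, H\"{o}lder with the full weight $t$ (resp. $1-t$) raised to the $p$-th power, then convexity of $\left\vert f^{\prime }\right\vert ^{q}$ — is exactly the paper's route. The paper merely packages it differently, proving the general Theorem \ref{teo1} for arbitrary $x$ and $s$ and then recovering \eqref{e.1.5} in a remark by setting $x=\frac{a+b}{2}$ and $s=1$, which specializes to precisely your computation.
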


\begin{theorem}
Let $f:I\subset \mathbb{R}\rightarrow \mathbb{R}$ be a differentiable
mapping on $I^{\circ },\ $where $a,b\in I^{\circ }$ with $a<b,$ and let $%
p>1. $ If the mapping $\left\vert f^{\prime }\right\vert ^{p/(p-1)}$ is
convex on $[a,b],$ then we have%
\begin{equation}
\left\vert \frac{1}{b-a}\int_{a}^{b}f(x)dx-f\left( \frac{a+b}{2}\right)
\right\vert \leq \dfrac{\left( b-a\right) }{4}\left( \frac{4}{p+1}\right) ^{%
\frac{1}{p}}\left( \left\vert f^{\prime }(a)\right\vert +\left\vert
f^{\prime }(b)\right\vert \right) .  \label{e.1.6}
\end{equation}
\end{theorem}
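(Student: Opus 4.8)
The plan is to obtain (\ref{e.1.6}) as a direct consequence of the immediately preceding inequality (\ref{e.1.5}), since both theorems rest on the same hypothesis that $\left| f'\right| ^{q}$ is convex, where throughout I abbreviate $q=p/(p-1)>1$, $A=\left| f'(a)\right| $ and $B=\left| f'(b)\right| $. Granting (\ref{e.1.5}), the entire task reduces to relaxing the bracketed term
\[
\left( A^{q}+3B^{q}\right) ^{1/q}+\left( 3A^{q}+B^{q}\right) ^{1/q}
\]
into a constant multiple of $A+B$. No analytic machinery beyond (\ref{e.1.5}) is then required, and the work is a single elementary estimate.

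The crux is the subadditivity inequality $\left( x^{q}+y^{q}\right) ^{1/q}\leq x+y$, valid for all $x,y\geq 0$ and $q\geq 1$ (it records $\left\Vert \cdot \right\Vert _{q}\leq \left\Vert \cdot \right\Vert _{1}$ on $\mathbb{R}^{2}$, or follows from $s^{q}+(1-s)^{q}\leq 1$ for $s\in \lbrack 0,1]$). Writing $3B^{q}=(3^{1/q}B)^{q}$ and applying it gives $\left( A^{q}+3B^{q}\right) ^{1/q}\leq A+3^{1/q}B$, and since $q>1$ forces $3^{1/q}\leq 3$, this is at most $A+3B$; symmetrically $\left( 3A^{q}+B^{q}\right) ^{1/q}\leq 3A+B$. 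Adding the two bounds collapses the bracket to $(A+3B)+(3A+B)=4(A+B)$. Substituting into (\ref{e.1.5}) converts the prefactor $\frac{b-a}{16}$ into $\frac{b-a}{16}\cdot 4=\frac{b-a}{4}$, which is precisely (\ref{e.1.6}).

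Were I to argue from scratch rather than quote (\ref{e.1.5}), I would first record the midpoint analogue of Lemma \ref{l.1.1},
\[
\frac{1}{b-a}\int_{a}^{b}f(x)dx-f\left( \frac{a+b}{2}\right) =(b-a)\int_{0}^{1}k(t)f^{\prime }(ta+(1-t)b)dt,
\]
with $k(t)=t$ on $[0,\frac{1}{2}]$ and $k(t)=t-1$ on $(\frac{1}{2},1]$, proved by integrating by parts on each subinterval. Taking absolute values, splitting at $t=\frac{1}{2}$, applying H\"older's inequality with exponents $p,q$ on each piece, and invoking the convexity of $\left| f^{\prime }\right| ^{q}$ (which contributes the weights $\frac{1}{8}$ and $\frac{3}{8}$ coming from $\int_{0}^{1/2}t\,dt$ and $\int_{0}^{1/2}(1-t)\,dt$) reproduces the right-hand side of (\ref{e.1.5}); the factor $\left( \frac{4}{p+1}\right) ^{1/p}$ emerges exactly from $\left( \int_{0}^{1/2}t^{p}dt\right) ^{1/p}\left( \frac{1}{8}\right) ^{1/q}$ after collecting powers of $2$, and one finishes with the same relaxation as above. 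The only delicate point is bookkeeping rather than an idea: one must track the powers of $2$ through the H\"older step so the constants combine into exactly $\frac{1}{4}\left( \frac{4}{p+1}\right) ^{1/p}$, and one must confirm that $3^{1/q}\leq 3$ is available for every admissible exponent, i.e.\ for all $p>1$; since $q=p/(p-1)>1$ this always holds, so there is no genuine obstruction, and (\ref{e.1.6}) is secured the moment the bracket in (\ref{e.1.5}) is bounded by $4(A+B)$.
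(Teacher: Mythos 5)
Your derivation is correct, but note that this statement is one the paper itself never proves: it is quoted verbatim from \cite{USK} as background, so there is no in-paper proof to match against. What you have done is reconstruct, essentially, K\i rmac\i 's original route, deducing (\ref{e.1.6}) from (\ref{e.1.5}) by the relaxation
\begin{equation*}
\left( A^{q}+3B^{q}\right) ^{1/q}+\left( 3A^{q}+B^{q}\right) ^{1/q}\leq
\left( A+3^{1/q}B\right) +\left( 3^{1/q}A+B\right) \leq 4\left( A+B\right) ,
\end{equation*}
which is valid since $\left( x^{q}+y^{q}\right) ^{1/q}\leq x+y$ for $q\geq 1$ and $3^{1/q}\leq 3$; the constant then scales $\frac{b-a}{16}$ to $\frac{b-a}{4}$ exactly as you say. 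Your from-scratch sketch also checks out: with $x=\frac{a+b}{2}$ the breakpoint $\frac{b-x}{b-a}$ in Lemma \ref{l.1.1} is $\frac{1}{2}$, the convexity weights are $\int_{0}^{1/2}t\,dt=\frac{1}{8}$ and $\int_{0}^{1/2}(1-t)\,dt=\frac{3}{8}$, and collecting $2^{1+1/p+3/q}=2^{4-2/p}$ yields the prefactor $\frac{1}{16}\left( \frac{4}{p+1}\right) ^{1/p}$ of (\ref{e.1.5}); indeed the present paper obtains (\ref{e.1.5}) itself as the case $x=\frac{a+b}{2}$, $s=1$ of its Theorem \ref{teo1}, so your argument closes entirely within the paper's toolkit. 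Two remarks: your relaxation is deliberately lossy (the step $3^{1/q}\mapsto 3$), which is consistent with the paper's own observation that the bound (\ref{E5}) from Corollary \ref{cor2} improves on (\ref{e.1.6}) --- though beware that Corollary \ref{cor2} carries the extra hypothesis $f^{\prime }(a)=f^{\prime }\left( \frac{a+b}{2}\right) =f^{\prime }(b)$, so it is not a proof of (\ref{e.1.6}) in general; and keeping the sharper $3^{1/q}$ would give a strictly better constant than the stated theorem for every $p>1$.
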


The main purpose of this paper is to establish new Ostrowski's type
inequalities for the class of functions whose derivatives in absolute value
at certain powers are $s-$convex in the second sense. Also, using these
results we note some consequent applications to special means and to
estimates of the error term in the midpoint formula.

\section{Main Results}

The next theorem gives a new result of the Ostrowski's inequality for $s$%
-convex functions:

\begin{theorem}
\label{t.2.0} Let $f:I\subset \lbrack 0,\infty )\rightarrow \mathbb{R}$ be a
differentiable mapping on $I^{\circ }$ such that $f^{\prime }\in L\left[ a,b%
\right] $, where $a,b\in I$ with $a<b.$ If $\left\vert f^{\prime
}\right\vert $ is $s-$convex on $\left[ a,b\right] $, for some fixed $s\in
\left( 0,1\right] $, then the following inequality holds:%
\begin{eqnarray*}
&&\left\vert f(x)-\frac{1}{b-a}\int_{a}^{b}f(u)du\right\vert \\
&\leq &\frac{b-a}{\left( s+1\right) \left( s+2\right) } \\
&&\times \left\{ \left[ 2\left( s+1\right) \left( \frac{b-x}{b-a}\right)
^{s+2}-\left( s+2\right) \left( \frac{b-x}{b-a}\right) ^{s+1}+1\right]
\left\vert f^{\prime }(a)\right\vert \right. \\
&&\left. +\left[ 2\left( s+1\right) \left( \frac{x-a}{b-a}\right)
^{s+2}-\left( s+2\right) \left( \frac{x-a}{b-a}\right) ^{s+1}+1\right]
\left\vert f^{\prime }(b)\right\vert \right\}
\end{eqnarray*}%
for each $x\in \left[ a,b\right] .$
\end{theorem}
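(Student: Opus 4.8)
The plan is to start from the integral identity in Lemma~\ref{l.1.1}, which expresses $f(x)-\frac{1}{b-a}\int_a^b f(u)\,du$ as $(a-b)\int_0^1 p(t)f'(ta+(1-t)b)\,dt$. Taking absolute values and using $|a-b|=b-a$, I would move the absolute value inside the integral to obtain the bound
\begin{equation*}
\left\vert f(x)-\frac{1}{b-a}\int_a^b f(u)\,du\right\vert \leq (b-a)\int_0^1 |p(t)|\,|f'(ta+(1-t)b)|\,dt.
\end{equation*}
Because $p(t)$ changes sign at $t_0=\frac{b-x}{b-a}$, I would split this integral at $t_0$, writing it as $\int_0^{t_0} t\,|f'(ta+(1-t)b)|\,dt + \int_{t_0}^1 (1-t)\,|f'(ta+(1-t)b)|\,dt$, since $|p(t)|=t$ on the first piece and $|p(t)|=1-t$ on the second.

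Next I would invoke the $s$-convexity of $|f'|$ to bound the integrand. Applying the definition with $\alpha=t$, $x=a$, $y=b$ gives $|f'(ta+(1-t)b)|\leq t^s|f'(a)|+(1-t)^s|f'(b)|$. Substituting this into both pieces reduces everything to four elementary integrals of the form $\int t^{s+1}\,dt$, $\int t(1-t)^s\,dt$, $\int (1-t)t^s\,dt$, and $\int (1-t)^{s+1}\,dt$, each taken over the appropriate subinterval $[0,t_0]$ or $[t_0,1]$. These are all computable in closed form (the mixed ones via the substitution $u=1-t$ or integration by parts), and the main bookkeeping task will be to evaluate them at the limit $t_0=\frac{b-x}{b-a}$ and regroup the results. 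The single genuine obstacle is this regrouping: collecting the four evaluated integrals so that the coefficient of $|f'(a)|$ and the coefficient of $|f'(b)|$ each assemble into the claimed bracketed expressions $\bigl[2(s+1)(\tfrac{b-x}{b-a})^{s+2}-(s+2)(\tfrac{b-x}{b-a})^{s+1}+1\bigr]$ and the symmetric term with $\frac{x-a}{b-a}$ in place of $\frac{b-x}{b-a}$.

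In carrying out that regrouping I would keep the common factor $\frac{1}{(s+1)(s+2)}$ factored out from the start, since each elementary integral naturally produces denominators $s+1$ or $s+2$; clearing these to the common denominator $(s+1)(s+2)$ is what generates the integer coefficients $2(s+1)$ and $(s+2)$ in the brackets. For the $|f'(b)|$ coefficient I expect the substitution $u=1-t$ (so that $1-t_0=\frac{x-a}{b-a}$) to reveal the symmetry that turns $\frac{b-x}{b-a}$ into $\frac{x-a}{b-a}$, explaining why the two brackets have identical shape with the roles of $x-a$ and $b-x$ exchanged. The constant $+1$ appearing in each bracket should emerge as the leftover contribution from the full-range integrals $\int_0^1$ after the $t_0$-dependent boundary terms are collected. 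Once the two coefficients are verified to match, multiplying through by the prefactor $\frac{b-a}{(s+1)(s+2)}$ yields the stated inequality, completing the proof.
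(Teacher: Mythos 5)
Your proposal is correct and follows essentially the same route as the paper: both apply Lemma~\ref{l.1.1}, split the integral at $t_{0}=\frac{b-x}{b-a}$ where $|p(t)|$ switches from $t$ to $1-t$, invoke the $s$-convexity bound $|f'(ta+(1-t)b)|\leq t^{s}|f'(a)|+(1-t)^{s}|f'(b)|$, and evaluate the same four elementary integrals before regrouping over the common denominator $(s+1)(s+2)$. The regrouping you flag as the main obstacle does work out exactly as you anticipate (with $1-t_{0}=\frac{x-a}{b-a}$ supplying the symmetric bracket for $|f'(b)|$), so the plan is sound as stated.
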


\begin{proof}
By Lemma \ref{l.1.1} and since $\left\vert f^{\prime }\right\vert $ is $s-$%
convex on $\left[ a,b\right] $, then we have%
\begin{eqnarray*}
&&\left\vert f(x)-\frac{1}{b-a}\int_{a}^{b}f(u)du\right\vert \\
&\leq &\left( b-a\right) \int_{0}^{\frac{b-x}{b-a}}t\left\vert f^{\prime
}\left( ta+\left( 1-t\right) b\right) \right\vert dt \\
&&+\left( b-a\right) \int_{\frac{b-x}{b-a}}^{1}\left\vert t-1\right\vert
\left\vert f^{\prime }\left( ta+\left( 1-t\right) b\right) \right\vert dt \\
&\leq &\left( b-a\right) \int_{0}^{\frac{b-x}{b-a}}t\left( t^{s}\left\vert
f^{\prime }(a)\right\vert +\left( 1-t\right) ^{s}\left\vert f^{\prime
}(b)\right\vert \right) dt \\
&&+\left( b-a\right) \int_{\frac{b-x}{b-a}}^{1}\left( 1-t\right) \left(
t^{s}\left\vert f^{\prime }(a)\right\vert +\left( 1-t\right) ^{s}\left\vert
f^{\prime }(b)\right\vert \right) dt
\end{eqnarray*}%
\begin{eqnarray*}
&=&\left( b-a\right) \left\{ \left\vert f^{\prime }(a)\right\vert \int_{0}^{%
\frac{b-x}{b-a}}t^{s+1}dt+\left\vert f^{\prime }(b)\right\vert \int_{0}^{%
\frac{b-x}{b-a}}t\left( 1-t\right) ^{s}dt\right. \\
&&\left. +\left\vert f^{\prime }(a)\right\vert \int_{\frac{b-x}{b-a}%
}^{1}\left( t^{s}-t^{s+1}\right) dt+\left\vert f^{\prime }(b)\right\vert
\int_{\frac{b-x}{b-a}}^{1}\left( 1-t\right) ^{s+1}dt\right\} \\
&=&\frac{b-a}{\left( s+1\right) \left( s+2\right) }\left\{ \left[ 2\left(
s+1\right) \left( \frac{b-x}{b-a}\right) ^{s+2}-\left( s+2\right) \left( 
\frac{b-x}{b-a}\right) ^{s+1}+1\right] \left\vert f^{\prime }(a)\right\vert
\right. \\
&&\left. +\left[ s\left( \frac{x-a}{b-a}\right) ^{s+2}-\left( s+2\right) 
\frac{b-x}{b-a}\left( \frac{x-a}{b-a}\right) ^{s+1}+1\right] \left\vert
f^{\prime }(b)\right\vert \right\}
\end{eqnarray*}%
where we use the facts that%
\begin{equation*}
\int_{0}^{\frac{b-x}{b-a}}t^{s+1}dt=\frac{1}{s+2}\left( \frac{b-x}{b-a}%
\right) ^{s+2}
\end{equation*}%
\begin{equation*}
\int_{0}^{\frac{b-x}{b-a}}t\left( 1-t\right) ^{s}dt=\frac{1}{s+2}\left( 
\frac{x-a}{b-a}\right) ^{s+2}-\frac{1}{\left( s+1\right) }\left( \frac{x-a}{%
b-a}\right) ^{s+1}+\frac{1}{\left( s+1\right) \left( s+2\right) }
\end{equation*}%
\begin{equation*}
\int_{\frac{b-x}{b-a}}^{1}\left( t^{s}-t^{s+1}\right) dt=\frac{1}{\left(
s+1\right) \left( s+2\right) }-\frac{1}{s+1}\left( \frac{b-x}{b-a}\right)
^{s+1}+\frac{1}{s+2}\left( \frac{b-x}{b-a}\right) ^{s+2}
\end{equation*}%
\begin{equation*}
\int_{\frac{b-x}{b-a}}^{1}\left( 1-t\right) ^{s+1}dt=\frac{1}{s+2}\left( 
\frac{x-a}{b-a}\right) ^{s+2}
\end{equation*}%
which completes the proof.
\end{proof}

\begin{corollary}
\label{cor1} In Theorem \ref{t.2.0}, if we choose $x=\frac{a+b}{2},$ then we
have the following midpoint inequality:%
\begin{eqnarray}
&&\left\vert f(\frac{a+b}{2})-\frac{1}{b-a}\int_{a}^{b}f(u)du\right\vert
\label{e.2.0} \\
&\leq &\frac{b-a}{\left( s+1\right) \left( s+2\right) }\left( 1-\frac{1}{%
2^{s+1}}\right) \left[ \left\vert f^{\prime }(a)\right\vert +\left\vert
f^{\prime }(b)\right\vert \right] .  \notag
\end{eqnarray}
\end{corollary}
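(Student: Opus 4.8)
The plan is to obtain this corollary as a direct specialization of Theorem \ref{t.2.0}, so no new inequality work is needed---I would simply substitute the midpoint $x=\frac{a+b}{2}$ into the bound already established and simplify. The first step is to observe that at this choice of $x$ both ratios appearing in the theorem collapse to the same value: since $b-\frac{a+b}{2}=\frac{b-a}{2}$ and $\frac{a+b}{2}-a=\frac{b-a}{2}$, we get
\[
\frac{b-x}{b-a}=\frac{x-a}{b-a}=\frac{1}{2}.
\]
This symmetry is what makes the two bracketed coefficients (the one multiplying $|f'(a)|$ and the one multiplying $|f'(b)|$) coincide, producing the common factor seen in \eqref{e.2.0}.

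The second step is the routine algebraic simplification of the bracketed coefficient. Inserting $\frac{b-x}{b-a}=\frac12$ into the coefficient of $|f'(a)|$ gives
\[
2(s+1)\Bigl(\tfrac12\Bigr)^{s+2}-(s+2)\Bigl(\tfrac12\Bigr)^{s+1}+1
=\frac{s+1}{2^{\,s+1}}-\frac{s+2}{2^{\,s+1}}+1
=1-\frac{1}{2^{\,s+1}},
\]
and by the symmetry noted above the coefficient of $|f'(b)|$ reduces to exactly the same expression. Factoring out this common value and the prefactor $\frac{b-a}{(s+1)(s+2)}$ then yields precisely the right-hand side of \eqref{e.2.0}.

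The only point demanding a little care is that the coefficient of $|f'(b)|$ as literally printed in the theorem's final display reads $s\bigl(\tfrac{x-a}{b-a}\bigr)^{s+2}-(s+2)\tfrac{b-x}{b-a}\bigl(\tfrac{x-a}{b-a}\bigr)^{s+1}+1$, which differs from the intended symmetric form; I would instead use the correct symmetric coefficient $2(s+1)\bigl(\tfrac{x-a}{b-a}\bigr)^{s+2}-(s+2)\bigl(\tfrac{x-a}{b-a}\bigr)^{s+1}+1$ consistent with the statement of Theorem \ref{t.2.0}, evaluate it at $\tfrac{x-a}{b-a}=\tfrac12$, and recover $1-\frac{1}{2^{\,s+1}}$ as before. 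Thus the substitution is the entire content of the argument, and there is no genuine obstacle beyond this bookkeeping: combining the two equal coefficients completes the proof.
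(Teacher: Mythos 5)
Your proof is correct and is exactly the substitution the paper intends: the paper states Corollary \ref{cor1} without proof as an immediate specialization of Theorem \ref{t.2.0} at $x=\frac{a+b}{2}$, and your evaluation $2(s+1)\left(\tfrac12\right)^{s+2}-(s+2)\left(\tfrac12\right)^{s+1}+1=1-\tfrac{1}{2^{s+1}}$ for both coefficients is right. One small correction to your aside: the coefficient printed in the theorem's proof is not actually in conflict with the symmetric form, since with $u=\tfrac{x-a}{b-a}$ and $\tfrac{b-x}{b-a}=1-u$ one has $s\,u^{s+2}-(s+2)(1-u)u^{s+1}+1=2(s+1)u^{s+2}-(s+2)u^{s+1}+1$, so the two expressions are algebraically identical and your use of the symmetric form needs no caveat.
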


\begin{remark}
\label{R.ek1} In Corollary \ref{cor1}, if $s=1$, then we have%
\begin{equation*}
\left\vert f(\frac{a+b}{2})-\frac{1}{b-a}\int_{a}^{b}f(u)du\right\vert \leq 
\dfrac{\left( b-a\right) }{4}\left[ \frac{\left\vert f^{\prime
}(a)\right\vert +\left\vert f^{\prime }(b)\right\vert }{2}\right]
\end{equation*}%
which is (\ref{e.1.4}).
\end{remark}

\begin{theorem}
\label{teo1} Let $f:I\subset \lbrack 0,\infty )\rightarrow \mathbb{R}$ be a
differentiable mapping on $I^{\circ }$ such that $f^{\prime }\in L\left[ a,b%
\right] $, where $a,b\in I$ with $a<b.$ If $\left\vert f^{\prime
}\right\vert ^{q}$ is $s-$convex on $\left[ a,b\right] $, for some fixed $%
s\in \left( 0,1\right] $ and $p>1$, then the following inequality holds:%
\begin{eqnarray*}
&&\left\vert f(x)-\frac{1}{b-a}\int_{a}^{b}f(u)du\right\vert \\
&\leq &\left( b-a\right) \frac{1}{\left( p+1\right) ^{\frac{1}{p}}}\frac{1}{%
\left( s+1\right) ^{\frac{1}{q}}} \\
&&\times \left\{ \left( \frac{b-x}{b-a}\right) ^{1+\frac{1}{p}}\left( \left( 
\frac{b-x}{b-a}\right) ^{s+1}\left\vert f^{\prime }(a)\right\vert ^{q}+\left[
1-\left( \frac{x-a}{b-a}\right) ^{s+1}\right] \left\vert f^{\prime
}(b)\right\vert ^{q}\right) ^{\frac{1}{q}}\right. \\
&&+\left. \left( \frac{x-a}{b-a}\right) ^{1+\frac{1}{p}}\left( \left[
1-\left( \frac{b-x}{b-a}\right) ^{s+1}\right] \left\vert f^{\prime
}(a)\right\vert ^{q}+\left( \frac{x-a}{b-a}\right) ^{s+1}\left\vert
f^{\prime }(b)\right\vert ^{q}\right) ^{\frac{1}{q}}\right\}
\end{eqnarray*}%
for each $x\in \left[ a,b\right] $, where $\frac{1}{p}+\frac{1}{q}=1.$
\end{theorem}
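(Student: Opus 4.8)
The plan is to mirror the structure of the proof of Theorem \ref{t.2.0}, inserting an application of Hölder's inequality before invoking $s$-convexity. First I would apply Lemma \ref{l.1.1}, take absolute values, and split the integral at $t=\frac{b-x}{b-a}$ exactly as in the previous theorem, yielding
\begin{equation*}
\left\vert f(x)-\frac{1}{b-a}\int_{a}^{b}f(u)du\right\vert \leq (b-a)\int_{0}^{\frac{b-x}{b-a}}t\,\left\vert f^{\prime }(ta+(1-t)b)\right\vert dt+(b-a)\int_{\frac{b-x}{b-a}}^{1}(1-t)\,\left\vert f^{\prime }(ta+(1-t)b)\right\vert dt.
\end{equation*}

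The key step, where this proof departs from the earlier one, is to treat each integral with Hölder's inequality, pairing the polynomial weight ($t$ or $1-t$) with the derivative factor. For the first integral this gives the bound $\left(\int_{0}^{\frac{b-x}{b-a}}t^{p}dt\right)^{1/p}\left(\int_{0}^{\frac{b-x}{b-a}}\left\vert f^{\prime }(ta+(1-t)b)\right\vert ^{q}dt\right)^{1/q}$, and analogously for the second with $(1-t)^{p}$ in place of $t^{p}$. The polynomial factors integrate in closed form to $\frac{1}{p+1}\left(\frac{b-x}{b-a}\right)^{p+1}$ and $\frac{1}{p+1}\left(\frac{x-a}{b-a}\right)^{p+1}$ respectively (using $1-\frac{b-x}{b-a}=\frac{x-a}{b-a}$), which after taking $p$-th roots produce the factors $\frac{1}{(p+1)^{1/p}}\left(\frac{b-x}{b-a}\right)^{1+\frac{1}{p}}$ and $\frac{1}{(p+1)^{1/p}}\left(\frac{x-a}{b-a}\right)^{1+\frac{1}{p}}$ appearing in the statement.

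Next I would bound the remaining $q$-integrals using the $s$-convexity of $\left\vert f^{\prime }\right\vert ^{q}$, namely $\left\vert f^{\prime }(ta+(1-t)b)\right\vert ^{q}\leq t^{s}\left\vert f^{\prime }(a)\right\vert ^{q}+(1-t)^{s}\left\vert f^{\prime }(b)\right\vert ^{q}$. This reduces each to a combination of the elementary integrals $\int t^{s}dt$ and $\int (1-t)^{s}dt$ over the appropriate subinterval. Computing these and using $\int_{0}^{c}(1-t)^{s}dt=\frac{1}{s+1}\left[1-(1-c)^{s+1}\right]$ yields, for the first piece, the factor $\frac{1}{s+1}\left[\left(\frac{b-x}{b-a}\right)^{s+1}\left\vert f^{\prime }(a)\right\vert ^{q}+\left(1-\left(\frac{x-a}{b-a}\right)^{s+1}\right)\left\vert f^{\prime }(b)\right\vert ^{q}\right]$, and the symmetric expression for the second piece. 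Assembling the two products and factoring out $(b-a)\frac{1}{(p+1)^{1/p}}\frac{1}{(s+1)^{1/q}}$ produces the claimed inequality.

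Since every integrand is a power function and every inequality step is a direct application of Hölder or the defining inequality for $s$-convexity, there is no genuine analytic obstacle. The only point demanding care is the bookkeeping: one must consistently convert limits of integration such as $1-\frac{b-x}{b-a}$ into $\frac{x-a}{b-a}$ and keep the two conjugate exponents straight, so that the $(s+1)^{1/q}$ normalization and the mixed powers $\left(\frac{b-x}{b-a}\right)^{s+1}$, $\left(\frac{x-a}{b-a}\right)^{s+1}$ land in the correct slots of the two $q$-th-root factors.
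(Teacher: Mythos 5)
Your proposal is correct and follows essentially the same route as the paper's own proof: apply Lemma \ref{l.1.1}, split at $t=\frac{b-x}{b-a}$, use H\"{o}lder's inequality on each piece with the weights $t^{p}$ and $(1-t)^{p}$ (the paper's (\ref{E3}) and (\ref{E4})), and bound the remaining $q$-integrals via $s$-convexity exactly as in the paper's (\ref{E1}) and (\ref{E2}). All the stated closed-form integrals and the conversion $1-\frac{b-x}{b-a}=\frac{x-a}{b-a}$ check out, so nothing is missing.
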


\begin{proof}
Suppose that $p>1.$ From Lemma \ref{l.1.1} and using the H\"{o}lder
inequality, we have 
\begin{eqnarray}
&&\left\vert f(x)-\frac{1}{b-a}\int_{a}^{b}f(u)du\right\vert  \label{E0} \\
&\leq &\left( b-a\right) \int_{0}^{\frac{b-x}{b-a}}t\left\vert f^{\prime
}\left( ta+\left( 1-t\right) b\right) \right\vert dt  \notag \\
&&+\left( b-a\right) \int_{\frac{b-x}{b-a}}^{1}\left\vert t-1\right\vert
\left\vert f^{\prime }\left( ta+\left( 1-t\right) b\right) \right\vert dt 
\notag \\
&\leq &\left( b-a\right) \left( \int_{0}^{\frac{b-x}{b-a}}t^{p}dt\right) ^{%
\frac{1}{p}}\left( \int_{0}^{\frac{b-x}{b-a}}\left\vert f^{\prime }\left(
ta+\left( 1-t\right) b\right) \right\vert ^{q}dt\right) ^{\frac{1}{q}} 
\notag \\
&&+\left( b-a\right) \left( \int_{\frac{b-x}{b-a}}^{1}\left( 1-t\right)
^{p}dt\right) ^{\frac{1}{p}}\left( \int_{\frac{b-x}{b-a}}^{1}\left\vert
f^{\prime }\left( ta+\left( 1-t\right) b\right) \right\vert ^{q}dt\right) ^{%
\frac{1}{q}}.  \notag
\end{eqnarray}%
Using the $s-$convexity of $\left\vert f^{\prime }\right\vert ^{q}$, we
obtain%
\begin{eqnarray}
&&\int_{0}^{\frac{b-x}{b-a}}\left\vert f^{\prime }\left( ta+\left(
1-t\right) b\right) \right\vert ^{q}dt  \label{E1} \\
&\leq &\int_{0}^{\frac{b-x}{b-a}}\left[ t^{s}\left\vert f^{\prime
}(a)\right\vert ^{q}+\left( 1-t\right) ^{s}\left\vert f^{\prime
}(b)\right\vert ^{q}\right] dt  \notag \\
&=&\frac{1}{s+1}\left\{ \left( \frac{b-x}{b-a}\right) ^{s+1}\left\vert
f^{\prime }(a)\right\vert ^{q}+\left[ 1-\left( \frac{x-a}{b-a}\right) ^{s+1}%
\right] \left\vert f^{\prime }(b)\right\vert ^{q}\right\}  \notag
\end{eqnarray}%
and%
\begin{eqnarray}
&&\int_{\frac{b-x}{b-a}}^{1}\left\vert f^{\prime }\left( ta+\left(
1-t\right) b\right) \right\vert ^{q}dt  \label{E2} \\
&\leq &\int_{\frac{b-x}{b-a}}^{1}\left[ t^{s}\left\vert f^{\prime
}(a)\right\vert ^{q}+\left( 1-t\right) ^{s}\left\vert f^{\prime
}(b)\right\vert ^{q}\right] dt  \notag \\
&=&\frac{1}{s+1}\left\{ \left[ 1-\left( \frac{b-x}{b-a}\right) ^{s+1}\right]
\left\vert f^{\prime }(a)\right\vert ^{q}+\left( \frac{x-a}{b-a}\right)
^{s+1}\left\vert f^{\prime }(b)\right\vert ^{q}\right\} .  \notag
\end{eqnarray}%
Further, we have 
\begin{equation}
\int_{0}^{\frac{b-x}{b-a}}t^{p}dt=\frac{1}{\left( p+1\right) }\left( \frac{%
b-x}{b-a}\right) ^{p+1}  \label{E3}
\end{equation}%
and%
\begin{equation}
\int_{\frac{b-x}{b-a}}^{1}\left( 1-t\right) ^{p}dt=\frac{1}{\left(
p+1\right) }\left( \frac{x-a}{b-a}\right) ^{p+1}.  \label{E4}
\end{equation}%
A combination of (\ref{E1})-(\ref{E4}) gives the required inequality (\ref%
{E0}).
\end{proof}

\begin{remark}
In Theorem \ref{teo1}, if we choose $x=\frac{a+b}{2}$ and $s=1$, then we have%
\begin{eqnarray*}
&&\left\vert f(\frac{a+b}{2})-\frac{1}{b-a}\int_{a}^{b}f(u)du\right\vert \\
&\leq &\dfrac{\left( b-a\right) }{16}\left( \frac{4}{p+1}\right) ^{\frac{1}{p%
}}\left[ \left( \left\vert f^{\prime }(a)\right\vert ^{q}+3\left\vert
f^{\prime }(b)\right\vert ^{q}\right) ^{1/q}+\left( 3\left\vert f^{\prime
}(a)\right\vert ^{q}+\left\vert f^{\prime }(b)\right\vert ^{q}\right) ^{1/q}%
\right]
\end{eqnarray*}%
which is (\ref{e.1.5}).
\end{remark}

\begin{theorem}
\label{t.2.1} Let $f:I\subset \lbrack 0,\infty )\rightarrow \mathbb{R}$ be a
differentiable mapping on $I^{\circ }$ such that $f^{\prime }\in L\left[ a,b%
\right] $, where $a,b\in I$ with $a<b.$ If $\left\vert f^{\prime
}\right\vert ^{q}$ is $s-$convex on $\left[ a,b\right] $, for some fixed $%
s\in \left( 0,1\right] $ and $p>1$, then the following inequality holds:%
\begin{eqnarray*}
&&\left\vert f(x)-\frac{1}{b-a}\int_{a}^{b}f(u)du\right\vert \\
&\leq &\frac{1}{\left( b-a\right) }\frac{1}{\left( p+1\right) ^{\frac{1}{p}}}
\\
&&\times \left\{ \left( b-x\right) ^{2}\left( \frac{\left\vert f^{\prime
}(x)\right\vert ^{q}+\left\vert f^{\prime }(b)\right\vert ^{q}}{s+1}\right)
^{\frac{1}{q}}+\left( x-a\right) ^{2}\left( \frac{\left\vert f^{\prime
}(a)\right\vert ^{q}+\left\vert f^{\prime }(x)\right\vert ^{q}}{s+1}\right)
^{\frac{1}{q}}\right\}
\end{eqnarray*}%
for each $x\in \left[ a,b\right] $, where $\frac{1}{p}+\frac{1}{q}=1.$
\end{theorem}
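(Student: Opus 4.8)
The plan is to first replace Lemma \ref{l.1.1} by a representation in which $x$ itself is one of the interpolation endpoints, since the right-hand side of the statement involves $\left\vert f'(x)\right\vert$ rather than only $\left\vert f'(a)\right\vert$ and $\left\vert f'(b)\right\vert$, and Lemma \ref{l.1.1} (whose integrand is $f'(ta+(1-t)b)$) can only produce the latter. Concretely, I would establish the identity
\begin{equation*}
f(x)-\frac{1}{b-a}\int_a^b f(u)\,du = \frac{(x-a)^2}{b-a}\int_0^1 t\,f'(tx+(1-t)a)\,dt - \frac{(b-x)^2}{b-a}\int_0^1 t\,f'(tx+(1-t)b)\,dt .
\end{equation*}
This is verified by the substitutions $u=tx+(1-t)a$ and $u=tx+(1-t)b$ followed by integration by parts: the first integral equals $\frac{1}{(x-a)^2}\big[(x-a)f(x)-\int_a^x f\big]$ and the second equals $\frac{1}{(b-x)^2}\big[(x-b)f(x)+\int_x^b f\big]$, and combining them collapses the $f(x)$ coefficients to $b-a$ and the two integrals to $\int_a^b f$.

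Granting this identity, I would take absolute values and use the triangle inequality to split the bound into two nonnegative pieces weighted by $\frac{(x-a)^2}{b-a}$ and $\frac{(b-x)^2}{b-a}$. To each piece I apply the H\"older inequality with exponents $p$ and $q$, peeling off the factor $t$:
\begin{equation*}
\int_0^1 t\,\big|f'(tx+(1-t)a)\big|\,dt \le \left(\int_0^1 t^p\,dt\right)^{1/p}\left(\int_0^1\big|f'(tx+(1-t)a)\big|^q\,dt\right)^{1/q},
\end{equation*}
and likewise for the $b$-integral. Since $\int_0^1 t^p\,dt=\frac{1}{p+1}$, this produces the common factor $(p+1)^{-1/p}$ seen in the statement.

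Finally I invoke the $s$-convexity of $\left\vert f'\right\vert^q$. Writing $tx+(1-t)a=t\cdot x+(1-t)\cdot a$, the definition gives $\big|f'(tx+(1-t)a)\big|^q\le t^s|f'(x)|^q+(1-t)^s|f'(a)|^q$, and using $\int_0^1 t^s\,dt=\int_0^1(1-t)^s\,dt=\frac{1}{s+1}$ yields $\int_0^1\big|f'(tx+(1-t)a)\big|^q\,dt\le \frac{|f'(a)|^q+|f'(x)|^q}{s+1}$; the analogous computation on $[x,b]$ gives $\frac{|f'(x)|^q+|f'(b)|^q}{s+1}$. Substituting these two bounds into the corresponding H\"older estimates and adding reproduces the claimed inequality exactly. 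The only genuinely delicate step is setting up the correct representation identity with $x$ as an interpolation endpoint; once that is in place, the remainder is the routine H\"older-plus-$s$-convexity computation, entirely parallel to the proof of Theorem \ref{teo1}.
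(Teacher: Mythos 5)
Your proof is correct: the representation identity is valid (and survives at $x=a$ and $x=b$, since the factors $(x-a)^2$ and $(b-x)^2$ that you divide out in the verification are multiplied back in the identity itself), the H\"older step is applied legitimately with $\int_0^1 t^p\,dt=\frac{1}{p+1}$, the $s$-convexity of $\left\vert f'\right\vert^q$ may indeed be applied to the pairs $(x,a)$ and $(x,b)$ in $[a,b]$, and the constants assemble into exactly the claimed bound. However, your opening premise --- that Lemma \ref{l.1.1} can only produce $\left\vert f'(a)\right\vert$ and $\left\vert f'(b)\right\vert$ --- is mistaken, and this is precisely where your route diverges from the paper's. The paper keeps Lemma \ref{l.1.1}, splits at $t=\frac{b-x}{b-a}$, applies H\"older on each piece, and then observes (in effect) that under $u=ta+(1-t)b$ the two $q$-integrals become $\frac{1}{b-a}\int_x^b\left\vert f'(u)\right\vert^q du$ and $\frac{1}{b-a}\int_a^x\left\vert f'(u)\right\vert^q du$, which it bounds by the right-hand Hermite--Hadamard inequality (\ref{e.1.3}) of Dragomir and Fitzpatrick applied on the subintervals $[x,b]$ and $[a,x]$; that is how $\left\vert f'(x)\right\vert$ enters. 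Your identity is exactly Lemma \ref{l.1.1} after this same change of variables carried out piecewise, and your final step --- bounding $\left\vert f'(tx+(1-t)a)\right\vert^q$ pointwise by $t^s\left\vert f'(x)\right\vert^q+(1-t)^s\left\vert f'(a)\right\vert^q$ and integrating --- is an inline reproof of the right-hand side of (\ref{e.1.3}). So the two arguments are mathematically equivalent; yours buys self-containedness (no appeal to the cited Hadamard-type result) and makes the appearance of $\left\vert f'(x)\right\vert$ transparent from the parametrization, while the paper's version is shorter given (\ref{e.1.3}) and reuses the single kernel lemma shared by all the theorems of the section.
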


\begin{proof}
Suppose that $p>1.$ From Lemma \ref{l.1.1} and using the H\"{o}lder
inequality, we have 
\begin{eqnarray*}
&&\left\vert f(x)-\frac{1}{b-a}\int_{a}^{b}f(u)du\right\vert \\
&\leq &\left( b-a\right) \int_{0}^{\frac{b-x}{b-a}}t\left\vert f^{\prime
}\left( ta+\left( 1-t\right) b\right) \right\vert dt \\
&&+\left( b-a\right) \int_{\frac{b-x}{b-a}}^{1}\left\vert t-1\right\vert
\left\vert f^{\prime }\left( ta+\left( 1-t\right) b\right) \right\vert dt \\
&\leq &\left( b-a\right) \left( \int_{0}^{\frac{b-x}{b-a}}t^{p}dt\right) ^{%
\frac{1}{p}}\left( \int_{0}^{\frac{b-x}{b-a}}\left\vert f^{\prime }\left(
ta+\left( 1-t\right) b\right) \right\vert ^{q}dt\right) ^{\frac{1}{q}} \\
&&+\left( b-a\right) \left( \int_{\frac{b-x}{b-a}}^{1}\left( 1-t\right)
^{p}dt\right) ^{\frac{1}{p}}\left( \int_{\frac{b-x}{b-a}}^{1}\left\vert
f^{\prime }\left( ta+\left( 1-t\right) b\right) \right\vert ^{q}dt\right) ^{%
\frac{1}{q}}.
\end{eqnarray*}%
Since $\left\vert f^{\prime }\right\vert ^{q}$ is $s-$convex, by (\ref{e.1.3}%
) we have%
\begin{equation}
\int_{0}^{\frac{b-x}{b-a}}\left\vert f^{\prime }\left( ta+\left( 1-t\right)
b\right) \right\vert ^{q}dt\leq \frac{b-x}{b-a}\left( \frac{\left\vert
f^{\prime }(x)\right\vert ^{q}+\left\vert f^{\prime }(b)\right\vert ^{q}}{s+1%
}\right)  \label{e.2.1}
\end{equation}%
and%
\begin{equation}
\int_{\frac{b-x}{b-a}}^{1}\left\vert f^{\prime }\left( ta+\left( 1-t\right)
b\right) \right\vert ^{q}dt\leq \frac{x-a}{b-a}\left( \frac{\left\vert
f^{\prime }(a)\right\vert ^{q}+\left\vert f^{\prime }(x)\right\vert ^{q}}{s+1%
}\right) .  \label{e.2.2}
\end{equation}%
Therefore, 
\begin{eqnarray*}
&&\left\vert f(x)-\frac{1}{b-a}\int_{a}^{b}f(u)du\right\vert \\
&\leq &\frac{1}{\left( b-a\right) }\frac{1}{\left( p+1\right) ^{\frac{1}{p}}}
\\
&&\times \left\{ \left( b-x\right) ^{2}\left( \frac{\left\vert f^{\prime
}(x)\right\vert ^{q}+\left\vert f^{\prime }(b)\right\vert ^{q}}{s+1}\right)
^{\frac{1}{q}}+\left( x-a\right) ^{2}\left( \frac{\left\vert f^{\prime
}(a)\right\vert ^{q}+\left\vert f^{\prime }(x)\right\vert ^{q}}{s+1}\right)
^{\frac{1}{q}}\right\}
\end{eqnarray*}%
where $\frac{1}{p}+\frac{1}{q}=1.$ Also, we note that 
\begin{equation*}
\int_{0}^{\frac{b-x}{b-a}}t^{p}dt=\frac{1}{p+1}\left( \frac{b-x}{b-a}\right)
^{p+1}
\end{equation*}%
and%
\begin{equation*}
\int_{\frac{b-x}{b-a}}^{1}\left( 1-t\right) ^{p}dt=\frac{1}{p+1}\left( \frac{%
x-a}{b-a}\right) ^{p+1}.
\end{equation*}%
This completes the proof.
\end{proof}

\begin{remark}
We choose $\left\vert f^{\prime }(x)\right\vert \leq M,$ $M>0$ in Theorem %
\ref{t.2.1}, then we recapture the inequality (\ref{EE}).
\end{remark}

\begin{corollary}
\label{c.2.2.} In Theorem \ref{t.2.1}, if we choose $x=\frac{a+b}{2},$ then%
\begin{eqnarray*}
&&\left\vert f(\frac{a+b}{2})-\frac{1}{b-a}\int_{a}^{b}f(u)du\right\vert \\
&\leq &\frac{b-a}{4}\frac{1}{\left( p+1\right) ^{\frac{1}{p}}}\left\{ \left( 
\frac{\left\vert f^{\prime }(\frac{a+b}{2})\right\vert ^{q}+\left\vert
f^{\prime }(b)\right\vert ^{q}}{s+1}\right) ^{\frac{1}{q}}+\left( \frac{%
\left\vert f^{\prime }(a)\right\vert ^{q}+\left\vert f^{\prime }(\frac{a+b}{2%
})\right\vert ^{q}}{s+1}\right) ^{\frac{1}{q}}\right\} .
\end{eqnarray*}
\end{corollary}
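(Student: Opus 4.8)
The plan is to derive this corollary as a direct specialization of Theorem \ref{t.2.1}, evaluating its general bound at the midpoint $x=\frac{a+b}{2}$; no fresh estimates are required, since the $s$-convexity and H\"{o}lder steps have already been absorbed into that theorem.

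First I would record that at $x=\frac{a+b}{2}$ the two geometric weights coincide:
\begin{equation*}
b-x=b-\frac{a+b}{2}=\frac{b-a}{2}=\frac{a+b}{2}-a=x-a,
\end{equation*}
so that $\left( b-x\right) ^{2}=\left( x-a\right) ^{2}=\frac{\left( b-a\right) ^{2}}{4}$. Substituting these into the right-hand side of Theorem \ref{t.2.1}, the common factor $\frac{\left( b-a\right) ^{2}}{4}$ can be pulled out of the brace, where it meets the outer prefactor $\frac{1}{b-a}$ to give $\frac{b-a}{4}$. Replacing every occurrence of $\left\vert f^{\prime }(x)\right\vert$ by $\left\vert f^{\prime }(\frac{a+b}{2})\right\vert$ then reproduces the displayed inequality verbatim.

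I do not expect a genuine obstacle here: the argument is purely arithmetic bookkeeping of the constants, and the two averaged terms already sit in the correct symmetric positions in Theorem \ref{t.2.1}, with $\frac{\left\vert f^{\prime }(x)\right\vert ^{q}+\left\vert f^{\prime }(b)\right\vert ^{q}}{s+1}$ attached to the $\left( b-x\right) ^{2}$ factor and $\frac{\left\vert f^{\prime }(a)\right\vert ^{q}+\left\vert f^{\prime }(x)\right\vert ^{q}}{s+1}$ to the $\left( x-a\right) ^{2}$ factor. The only care needed is to keep these pairings intact while simplifying, after which the equality of the two weights makes the final expression symmetric in the obvious way.
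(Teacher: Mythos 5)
Your proposal is correct and coincides with the paper's own (implicit) argument: the corollary is stated there as an immediate specialization of Theorem \ref{t.2.1}, and your substitution $b-x=x-a=\frac{b-a}{2}$, giving $(b-x)^{2}=(x-a)^{2}=\frac{(b-a)^{2}}{4}$ and hence the prefactor $\frac{1}{b-a}\cdot\frac{(b-a)^{2}}{4}=\frac{b-a}{4}$, is exactly the intended bookkeeping. You also correctly preserve the pairings of $\left\vert f^{\prime }(\frac{a+b}{2})\right\vert ^{q}$ with $\left\vert f^{\prime }(b)\right\vert ^{q}$ and of $\left\vert f^{\prime }(a)\right\vert ^{q}$ with $\left\vert f^{\prime }(\frac{a+b}{2})\right\vert ^{q}$, so nothing is missing.
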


\begin{corollary}
\label{cor2} In Corollary \ref{c.2.2.}, if we choose $f^{\prime
}(a)=f^{\prime }\left( \frac{a+b}{2}\right) =f^{\prime }(b)$ and $s=1$, then%
\begin{equation}
\left\vert f(\frac{a+b}{2})-\frac{1}{b-a}\int_{a}^{b}f(u)du\right\vert \leq 
\frac{b-a}{\left( p+1\right) ^{\frac{1}{p}}}\left( \frac{\left\vert
f^{\prime }(b)\right\vert +\left\vert f^{\prime }(a)\right\vert }{4}\right) .
\label{E5}
\end{equation}
\end{corollary}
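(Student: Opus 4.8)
The plan is to derive this directly from Corollary~\ref{c.2.2.} by substituting the two hypotheses $s=1$ and $f^{\prime}(a)=f^{\prime}\left(\frac{a+b}{2}\right)=f^{\prime}(b)$ into its right-hand side and simplifying. No new estimate is needed; the argument is purely algebraic.

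First I would recall the bound from Corollary~\ref{c.2.2.}, whose right-hand side is
\[
\frac{b-a}{4}\frac{1}{\left(p+1\right)^{\frac{1}{p}}}\left\{\left(\frac{\left\vert f^{\prime}(\frac{a+b}{2})\right\vert^{q}+\left\vert f^{\prime}(b)\right\vert^{q}}{s+1}\right)^{\frac{1}{q}}+\left(\frac{\left\vert f^{\prime}(a)\right\vert^{q}+\left\vert f^{\prime}(\frac{a+b}{2})\right\vert^{q}}{s+1}\right)^{\frac{1}{q}}\right\}.
\]
Setting $s=1$ replaces each denominator $s+1$ by $2$. The equality hypothesis $f^{\prime}(a)=f^{\prime}\left(\frac{a+b}{2}\right)=f^{\prime}(b)$ forces the three quantities $\left\vert f^{\prime}(a)\right\vert$, $\left\vert f^{\prime}(\frac{a+b}{2})\right\vert$ and $\left\vert f^{\prime}(b)\right\vert$ to coincide; I denote the common value by $A$.

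Under these substitutions each bracketed factor simplifies identically: the first becomes $\left(\frac{A^{q}+A^{q}}{2}\right)^{1/q}=\left(A^{q}\right)^{1/q}=A$, and likewise for the second. Their sum is therefore $2A$, so the whole right-hand side collapses to $\frac{b-a}{4}\frac{1}{(p+1)^{1/p}}\cdot 2A=\frac{b-a}{2}\frac{A}{(p+1)^{1/p}}$. Finally, since $A=\left\vert f^{\prime}(a)\right\vert=\left\vert f^{\prime}(b)\right\vert$ gives $\left\vert f^{\prime}(a)\right\vert+\left\vert f^{\prime}(b)\right\vert=2A$, I can rewrite $\frac{A}{2}=\frac{\left\vert f^{\prime}(a)\right\vert+\left\vert f^{\prime}(b)\right\vert}{4}$, which recasts the bound in the asserted form $\frac{b-a}{(p+1)^{1/p}}\left(\frac{\left\vert f^{\prime}(b)\right\vert+\left\vert f^{\prime}(a)\right\vert}{4}\right)$.

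There is no genuine obstacle here; the only point requiring mild care is the cosmetic final step of re-expressing the common constant $A$ (equivalently $A/2$) as $\tfrac14(\left\vert f^{\prime}(a)\right\vert+\left\vert f^{\prime}(b)\right\vert)$, so that the resulting inequality matches the stated right-hand side verbatim rather than merely being equal to it in value.
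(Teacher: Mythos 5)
Your proposal is correct and coincides with the paper's (implicit) derivation: Corollary~\ref{cor2} is obtained precisely by substituting $s=1$ and the hypothesis $f^{\prime}(a)=f^{\prime}\left(\frac{a+b}{2}\right)=f^{\prime}(b)$ into the bound of Corollary~\ref{c.2.2.}, each bracketed factor collapsing to the common value $A=\left\vert f^{\prime}(a)\right\vert$ and the resulting quantity $\frac{b-a}{2\left(p+1\right)^{1/p}}A$ being rewritten as $\frac{b-a}{\left(p+1\right)^{1/p}}\cdot\frac{\left\vert f^{\prime}(b)\right\vert+\left\vert f^{\prime}(a)\right\vert}{4}$. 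Your closing remark about the cosmetic rewriting of $A$ is exactly the right point of care, and there is no gap.
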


\begin{remark}
We note that the obtained midpoint inequality (\ref{E5}) is better than the
inequality (\ref{e.1.6}).
\end{remark}

\begin{theorem}
\label{z} Let $f:I\subset \lbrack 0,\infty )\rightarrow \mathbb{R}$ be a
differentiable mapping on $I^{\circ }$ such that $f^{\prime }\in L\left[ a,b%
\right] $, where $a,b\in I$ with $a<b.$ If $\left\vert f^{\prime
}\right\vert ^{q}$ is $s-$convex on $\left[ a,b\right] $, for some fixed $%
s\in \left( 0,1\right] $ and $p>1$, then the following inequality holds:%
\begin{eqnarray}
&&\left\vert f(x)-\frac{1}{b-a}\int_{a}^{b}f(u)du\right\vert  \notag \\
&&  \label{z0} \\
&\leq &\frac{\left( b-a\right) }{\left( p+1\right) ^{\frac{1}{p}}}\left[
\left( \frac{b-x}{b-a}\right) ^{p+1}+\left( \frac{x-a}{b-a}\right) ^{p+1}%
\right] ^{\frac{1}{p}}\left( \frac{\left\vert f^{\prime }(a)\right\vert
^{q}+\left\vert f^{\prime }(b)\right\vert ^{q}}{s+1}\right) ^{\frac{1}{q}} 
\notag
\end{eqnarray}%
for each $x\in \left[ a,b\right] $, where $\frac{1}{p}+\frac{1}{q}=1.$
\end{theorem}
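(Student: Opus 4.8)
The plan is to use Lemma~\ref{l.1.1} together with the H\"{o}lder inequality, but—unlike in Theorems~\ref{teo1} and~\ref{t.2.1}, where the integral over $[0,1]$ is first broken at $(b-x)/(b-a)$ and H\"{o}lder is applied to each piece—here I would apply H\"{o}lder to the \emph{single undivided} integral over $[0,1]$. First I would start from the identity in Lemma~\ref{l.1.1}, take absolute values, and obtain
\begin{equation*}
\left\vert f(x)-\frac{1}{b-a}\int_{a}^{b}f(u)du\right\vert \leq \left( b-a\right) \int_{0}^{1}\left\vert p(t)\right\vert \left\vert f^{\prime }\left( ta+\left( 1-t\right) b\right) \right\vert dt,
\end{equation*}
where $p(t)$ is the piecewise function of Lemma~\ref{l.1.1}.

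Next I would apply the H\"{o}lder inequality with exponents $p$ and $q$ to this one integral, giving
\begin{equation*}
\int_{0}^{1}\left\vert p(t)\right\vert \left\vert f^{\prime }\left( ta+\left( 1-t\right) b\right) \right\vert dt\leq \left( \int_{0}^{1}\left\vert p(t)\right\vert ^{p}dt\right) ^{\frac{1}{p}}\left( \int_{0}^{1}\left\vert f^{\prime }\left( ta+\left( 1-t\right) b\right) \right\vert ^{q}dt\right) ^{\frac{1}{q}}.
\end{equation*}
For the first factor I would split $\int_{0}^{1}\left\vert p(t)\right\vert ^{p}dt$ at $(b-x)/(b-a)$ and evaluate the two elementary integrals $\int_{0}^{(b-x)/(b-a)}t^{p}dt$ and $\int_{(b-x)/(b-a)}^{1}(1-t)^{p}dt$; using $1-\frac{b-x}{b-a}=\frac{x-a}{b-a}$ in the second (via the substitution $u=1-t$), these combine to $\frac{1}{p+1}\left[ \left( \frac{b-x}{b-a}\right) ^{p+1}+\left( \frac{x-a}{b-a}\right) ^{p+1}\right]$.

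For the second factor I would invoke the $s$-convexity of $\left\vert f^{\prime }\right\vert ^{q}$, bounding the integrand by $t^{s}\left\vert f^{\prime }(a)\right\vert ^{q}+\left( 1-t\right) ^{s}\left\vert f^{\prime }(b)\right\vert ^{q}$ and integrating over \emph{all} of $[0,1]$, where $\int_{0}^{1}t^{s}dt=\int_{0}^{1}(1-t)^{s}dt=\frac{1}{s+1}$, to get $\frac{\left\vert f^{\prime }(a)\right\vert ^{q}+\left\vert f^{\prime }(b)\right\vert ^{q}}{s+1}$. Assembling these two estimates together with the front factor $(b-a)$ yields exactly~(\ref{z0}). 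The only genuine subtlety is recognizing at the outset that H\"{o}lder should be applied to the full integral over $[0,1]$: this is precisely what collapses the two separate derivative contributions into the single symmetric factor $\left( \frac{\left\vert f^{\prime }(a)\right\vert ^{q}+\left\vert f^{\prime }(b)\right\vert ^{q}}{s+1}\right) ^{1/q}$, while keeping the geometric dependence on $x$ isolated in the bracketed $p$-power term; everything after that step is routine integration.
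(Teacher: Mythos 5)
Your proposal is correct and is essentially identical to the paper's own proof: the paper likewise applies H\"{o}lder's inequality to the single undivided integral $\int_{0}^{1}\left\vert p(t)\right\vert \left\vert f^{\prime }\left( ta+(1-t)b\right) \right\vert dt$, evaluates $\int_{0}^{1}\left\vert p(t)\right\vert ^{p}dt$ by splitting at $\frac{b-x}{b-a}$ to get $\frac{1}{p+1}\left[ \left( \frac{b-x}{b-a}\right) ^{p+1}+\left( \frac{x-a}{b-a}\right) ^{p+1}\right] $, and bounds the second factor by $\frac{\left\vert f^{\prime }(a)\right\vert ^{q}+\left\vert f^{\prime }(b)\right\vert ^{q}}{s+1}$ using $s$-convexity over all of $[0,1]$. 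All your intermediate computations match the paper's equations (\ref{z2})--(\ref{z4}), so there is nothing to correct.
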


\begin{proof}
Suppose that $p>1.$ From Lemma \ref{l.1.1} and using the H\"{o}lder
inequality, we have 
\begin{eqnarray}
&&\left\vert f(x)-\frac{1}{b-a}\int_{a}^{b}f(u)du\right\vert  \label{z2} \\
&&  \notag \\
&\leq &\left( b-a\right) \left( \int_{0}^{1}\left\vert p(t)\right\vert
^{p}dt\right) ^{\frac{1}{p}}\left( \int_{0}^{1}\left\vert f^{\prime }\left(
ta+\left( 1-t\right) b\right) \right\vert ^{q}dt\right) ^{\frac{1}{q}}. 
\notag
\end{eqnarray}%
Since $\left\vert f^{\prime }\right\vert ^{q}$ is $s-$convex, we have%
\begin{eqnarray}
\int_{0}^{1}\left\vert f^{\prime }\left( ta+\left( 1-t\right) b\right)
\right\vert ^{q}dt &\leq &\int_{0}^{1}\left( t^{s}\left\vert f^{\prime
}\left( a\right) \right\vert ^{q}+\left( 1-t\right) ^{s}\left\vert f^{\prime
}\left( b\right) \right\vert ^{q}\right) dt  \notag \\
&&  \label{z3} \\
&=&\frac{\left\vert f^{\prime }\left( a\right) \right\vert ^{q}+\left\vert
f^{\prime }\left( b\right) \right\vert ^{q}}{s+1}  \notag
\end{eqnarray}%
and%
\begin{eqnarray}
\int_{0}^{1}\left\vert p(t)\right\vert ^{p}dt &=&\dint_{0}^{\frac{b-x}{b-a}%
}t^{p}dt+\dint_{\frac{b-x}{b-a}}^{1}\left( 1-t\right) ^{p}dt  \notag \\
&&  \label{z4} \\
&=&\frac{1}{p+1}\left[ \left( \frac{b-x}{b-a}\right) ^{p+1}+\left( \frac{x-a%
}{b-a}\right) ^{p+1}\right] .  \notag
\end{eqnarray}%
Using (\ref{z3}) and (\ref{z4}) in (\ref{z2}), we obtain (\ref{z0}).
\end{proof}

\begin{corollary}
\label{zz} Under assumptation in Theorem \ref{z} with $p=q=2,$ we have 
\begin{equation*}
\left\vert f(x)-\frac{1}{b-a}\int_{a}^{b}f(u)du\right\vert \leq \frac{\left(
b-a\right) }{\sqrt{3}}\left[ \frac{1}{4}+\frac{\left( x-\frac{a+b}{2}\right)
^{2}}{(b-a)^{2}}\right] ^{\frac{1}{2}}\left( \frac{\left\vert f^{\prime
}(a)\right\vert ^{2}+\left\vert f^{\prime }(b)\right\vert ^{2}}{s+1}\right)
^{\frac{1}{2}}.
\end{equation*}
\end{corollary}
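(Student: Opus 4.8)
The plan is to read Corollary~\ref{zz} off Theorem~\ref{z} by the bare specialization $p=q=2$, so essentially all that remains is an algebraic simplification. First I would substitute $p=q=2$ into \eqref{z0}: the prefactor $(p+1)^{1/p}$ becomes $3^{1/2}=\sqrt{3}$, and the last factor $\left(\frac{|f'(a)|^{q}+|f'(b)|^{q}}{s+1}\right)^{1/q}$ turns verbatim into the claimed $\left(\frac{|f'(a)|^{2}+|f'(b)|^{2}}{s+1}\right)^{1/2}$. Thus the only content to verify is that the middle factor $\left[\left(\frac{b-x}{b-a}\right)^{p+1}+\left(\frac{x-a}{b-a}\right)^{p+1}\right]^{1/p}$, which at $p=2$ reads $\left[\left(\frac{b-x}{b-a}\right)^{3}+\left(\frac{x-a}{b-a}\right)^{3}\right]^{1/2}$, collapses to the bracketed factor $\left[\frac14+\frac{(x-\frac{a+b}{2})^{2}}{(b-a)^{2}}\right]^{1/2}$ appearing in the statement.

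For that simplification I would abbreviate $u=\frac{b-x}{b-a}$ and $v=\frac{x-a}{b-a}$; since $x\in[a,b]$ these are nonnegative and, crucially, $u+v=1$. The sum of cubes then factors as $u^{3}+v^{3}=(u+v)^{3}-3uv(u+v)=1-3uv$, reducing everything to the single product $uv$. Completing the square in $(b-x)(x-a)$ gives $(b-x)(x-a)=\frac{(b-a)^{2}}{4}-\left(x-\frac{a+b}{2}\right)^{2}$, whence $uv=\frac14-\frac{(x-\frac{a+b}{2})^{2}}{(b-a)^{2}}$ and therefore $u^{3}+v^{3}=\frac14+3\frac{(x-\frac{a+b}{2})^{2}}{(b-a)^{2}}$. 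Substituting this back into the specialized \eqref{z0} produces an Ostrowski-type bound of exactly the shape of the classical estimate \eqref{e.1.1}, with the uniform bound $M$ replaced by the $s$-convexity average of $|f'(a)|$ and $|f'(b)|$.

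Since Corollary~\ref{zz} is a direct specialization of an already-established inequality, there is no genuine analytic obstacle: the entire argument is the two-line reduction above. The only place an error can enter is the sum-of-cubes identity together with the completing-the-square step, so I would carefully reconcile the coefficient of $\left(x-\frac{a+b}{2}\right)^{2}/(b-a)^{2}$ produced by $u^{3}+v^{3}=1-3uv$ against the bracketed factor as written before declaring the proof complete.
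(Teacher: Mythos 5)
Your route coincides with the paper's: Corollary \ref{zz} is stated there without any separate proof, precisely as the bare specialization $p=q=2$ of \eqref{z0}, which is your plan, and your algebra is correct. Writing $u=\frac{b-x}{b-a}$, $v=\frac{x-a}{b-a}$, so that $u+v=1$ and $uv=\frac{1}{4}-\frac{\left(x-\frac{a+b}{2}\right)^{2}}{(b-a)^{2}}$, one gets
\[
u^{3}+v^{3}=1-3uv=\frac{1}{4}+\frac{3\left(x-\frac{a+b}{2}\right)^{2}}{(b-a)^{2}},
\]
with the coefficient $3$, exactly as you computed. But the ``reconciliation'' you defer in your final sentence cannot be carried out: the bracketed factor printed in the corollary has coefficient $1$, and it is the paper, not your computation, that is at fault --- the corollary as printed is false. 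Take $f(t)=t$ on $[a,b]=[0,1]$ with $s=1$, $p=q=2$ (so $\left\vert f^{\prime }\right\vert ^{2}\equiv 1$ is $s$-convex) and $x=0$: the left-hand side is $\left\vert 0-\frac{1}{2}\right\vert =\frac{1}{2}$, while the stated right-hand side is $\frac{1}{\sqrt{3}}\left( \frac{1}{4}+\frac{1}{4}\right) ^{1/2}\cdot 1=\frac{1}{\sqrt{6}}<\frac{1}{2}$. Your intermediate assertion that the specialized bound has ``exactly the shape'' of \eqref{e.1.1} glosses over precisely this point: the shape matches only up to the factor $3$ inside the bracket, and that factor is irremovable.

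What your computation genuinely proves is the corrected corollary, with middle factor $\left[ \frac{1}{4}+\frac{3\left( x-\frac{a+b}{2}\right) ^{2}}{(b-a)^{2}}\right] ^{1/2}$. Since the offending term vanishes at $x=\frac{a+b}{2}$, the midpoint Corollary \ref{zz1} is unaffected, but the Remark following it (the recaptured Ostrowski-type bound with the constant $M$) inherits the same false coefficient, as the same example $f(t)=Mt$ at $x=a$ shows. If one insists on reproducing the printed bracket, the best available route is the estimate $u^{3}+v^{3}\leq u^{2}+v^{2}=2\left[ \frac{1}{4}+\frac{\left( x-\frac{a+b}{2}\right) ^{2}}{(b-a)^{2}}\right] $ (valid since $0\leq u,v\leq 1$), which yields the stated form only at the cost of an extra factor $\sqrt{2}$; the constant $\frac{1}{\sqrt{3}}$ as printed cannot be salvaged. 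So complete your argument by keeping the coefficient $3$, and record that Corollary \ref{zz} as stated is a misprint of that inequality rather than a consequence of Theorem \ref{z}.
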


\begin{corollary}
\label{zz1} In Corollary \ref{zz}, if we choose $x=\frac{a+b}{2}$ and $s=1,$
then we have the following midpoint inequality:%
\begin{equation*}
\left\vert f(\frac{a+b}{2})-\frac{1}{b-a}\int_{a}^{b}f(u)du\right\vert \leq 
\frac{\left( b-a\right) }{2}\left( \frac{\left\vert f^{\prime
}(a)\right\vert ^{2}+\left\vert f^{\prime }(b)\right\vert ^{2}}{6}\right) ^{%
\frac{1}{2}}.
\end{equation*}
\end{corollary}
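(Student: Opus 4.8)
The plan is to specialize the inequality of Corollary~\ref{zz}, which is itself the $p=q=2$ instance of Theorem~\ref{z}; consequently no fresh integration, no use of the $s$-convexity estimate, and no application of H\"{o}lder's inequality is required, and the entire argument reduces to a substitution followed by arithmetic simplification of the numerical constants.

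First I would set $x=\frac{a+b}{2}$ in the bound of Corollary~\ref{zz}. This choice annihilates the displacement term, since $\left( x-\frac{a+b}{2}\right) ^{2}=0$, so the bracketed factor collapses:
\[
\left[ \frac{1}{4}+\frac{\left( x-\frac{a+b}{2}\right) ^{2}}{(b-a)^{2}}\right] ^{\frac{1}{2}}=\left( \frac{1}{4}\right) ^{\frac{1}{2}}=\frac{1}{2}.
\]
At this stage the right-hand side of Corollary~\ref{zz} becomes $\dfrac{(b-a)}{\sqrt{3}}\cdot \dfrac{1}{2}\left( \dfrac{\left\vert f^{\prime }(a)\right\vert ^{2}+\left\vert f^{\prime }(b)\right\vert ^{2}}{s+1}\right) ^{\frac{1}{2}}$, while the left-hand side is already the midpoint deviation $\left\vert f(\frac{a+b}{2})-\frac{1}{b-a}\int_{a}^{b}f(u)\,du\right\vert$.

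Next I would set $s=1$, so that $s+1=2$, which turns the bound into $\dfrac{(b-a)}{2\sqrt{3}}\left( \dfrac{\left\vert f^{\prime }(a)\right\vert ^{2}+\left\vert f^{\prime }(b)\right\vert ^{2}}{2}\right) ^{\frac{1}{2}}$. The final step is to collect the constants: pulling the factor $\tfrac{1}{2}$ from the $s+1$ term under the square root and using $\sqrt{3}\cdot \sqrt{2}=\sqrt{6}$ gives
\[
\frac{(b-a)}{2\sqrt{3}}\cdot \frac{1}{\sqrt{2}}\bigl( \left\vert f^{\prime }(a)\right\vert ^{2}+\left\vert f^{\prime }(b)\right\vert ^{2}\bigr) ^{\frac{1}{2}}=\frac{(b-a)}{2}\left( \frac{\left\vert f^{\prime }(a)\right\vert ^{2}+\left\vert f^{\prime }(b)\right\vert ^{2}}{6}\right) ^{\frac{1}{2}},
\]
which is precisely the asserted midpoint inequality.

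I do not expect any genuine analytic obstacle, since all of the work has already been absorbed into Theorem~\ref{z} and Corollary~\ref{zz}. The only point demanding care is the bookkeeping of the numerical constant, namely verifying that $\frac{1}{\sqrt{3}}\cdot \frac{1}{2}\cdot \frac{1}{\sqrt{2}}=\frac{1}{2\sqrt{6}}$ and that this matches the factor $\frac{1}{2}\cdot \frac{1}{\sqrt{6}}$ obtained by writing the constant $\frac{1}{6}$ inside the root; once this identity is confirmed the two displayed expressions coincide and the proof is complete.
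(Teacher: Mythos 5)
Your proposal is correct and is exactly the argument the paper intends: the paper states this corollary without proof precisely because it follows by the direct substitution $x=\frac{a+b}{2}$, $s=1$ into Corollary~\ref{zz}, and your constant bookkeeping $\frac{1}{\sqrt{3}}\cdot\frac{1}{2}\cdot\frac{1}{\sqrt{2}}=\frac{1}{2\sqrt{6}}$ checks out. Nothing further is needed.
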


\begin{remark}
We choose $\left\vert f^{\prime }(x)\right\vert \leq M,$ $M>0$ and $s=1$ in
Corollary \ref{zz}, then we recapture the following Ostrowski's type
inequality%
\begin{equation*}
\left\vert f(x)-\frac{1}{b-a}\int_{a}^{b}f(u)du\right\vert \leq \frac{%
M\left( b-a\right) }{\sqrt{3}}\left[ \frac{1}{4}+\frac{\left( x-\frac{a+b}{2}%
\right) ^{2}}{(b-a)^{2}}\right] ^{\frac{1}{2}}.
\end{equation*}
\end{remark}

\begin{theorem}
\label{t.2.2} Let $f:I\subset \lbrack 0,\infty )\rightarrow \mathbb{R}$ be a
differentiable mapping on $I^{\circ }$ such that $f^{\prime }\in L\left[ a,b%
\right] $, where $a,b\in I$ with $a<b.$ If $\left\vert f^{\prime
}\right\vert ^{q}$ is $s-$convex on $\left[ a,b\right] $, for some fixed $%
s\in \left( 0,1\right] $ and $q\geq 1$, then the following inequality holds:%
\begin{eqnarray*}
&&\left\vert f(x)-\frac{1}{b-a}\int_{a}^{b}f(u)du\right\vert \\
&\leq &\left( b-a\right) \left( \frac{1}{2}\right) ^{1-\frac{1}{q}}\left\{
\left( \frac{b-x}{b-a}\right) ^{2(1-1/q)}\left[ \frac{1}{s+2}\left( \frac{b-x%
}{b-a}\right) ^{s+2}\left\vert f^{\prime }(a)\right\vert ^{q}\right. \right.
\\
&&\left. \left. +\left( \frac{1}{s+2}\left( \frac{x-a}{b-a}\right) ^{s+2}-%
\frac{1}{s+1}\left( \frac{x-a}{b-a}\right) ^{s+1}+\frac{1}{\left( s+1\right)
\left( s+2\right) }\right) \left\vert f^{\prime }(b)\right\vert ^{q}\right]
^{\frac{1}{q}}\right\} \\
&&+\left( b-a\right) \left( \frac{1}{2}\right) ^{1-\frac{1}{q}}\left\{
\left( \frac{x-a}{b-a}\right) ^{2(1-1/q)}\left[ \frac{1}{s+2}\left( \frac{x-a%
}{b-a}\right) ^{s+2}\left\vert f^{\prime }(b)\right\vert ^{q}\right. \right.
\\
&&\left. \left. +\left( \frac{1}{s+2}\left( \frac{b-x}{b-a}\right) ^{s+2}-%
\frac{1}{s+1}\left( \frac{b-x}{b-a}\right) ^{s+1}+\frac{1}{\left( s+1\right)
\left( s+2\right) }\right) \left\vert f^{\prime }(a)\right\vert ^{q}\right]
^{\frac{1}{q}}\right\}
\end{eqnarray*}%
for each $x\in \left[ a,b\right] .$
\end{theorem}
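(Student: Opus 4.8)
The plan is to open exactly as in the earlier theorems but, since here $q\geq 1$ rather than $p>1$, to replace the direct use of H\"older's inequality with the \emph{weighted power-mean inequality}, which is the natural tool in this range. Starting from Lemma \ref{l.1.1}, I take absolute values and split the integral at $t=\frac{b-x}{b-a}$ according to the definition of $p(t)$, obtaining the upper bound
\[
\left(b-a\right)\int_{0}^{\frac{b-x}{b-a}}t\left\vert f'\left(ta+(1-t)b\right)\right\vert dt+\left(b-a\right)\int_{\frac{b-x}{b-a}}^{1}(1-t)\left\vert f'\left(ta+(1-t)b\right)\right\vert dt.
\]

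To each of these two integrals I apply the power-mean inequality using the weight already present: on the first integral I treat $t\,dt$ as the weight and write $\int t\,\phi\leq\left(\int t\,dt\right)^{1-1/q}\left(\int t\,\phi^{q}\,dt\right)^{1/q}$, and analogously on the second with weight $(1-t)$. The elementary factors come out at once, since $\int_{0}^{\frac{b-x}{b-a}}t\,dt=\frac12\left(\frac{b-x}{b-a}\right)^{2}$ and $\int_{\frac{b-x}{b-a}}^{1}(1-t)\,dt=\frac12\left(\frac{x-a}{b-a}\right)^{2}$, producing the prefactors $\left(\frac12\right)^{1-1/q}\left(\frac{b-x}{b-a}\right)^{2(1-1/q)}$ and $\left(\frac12\right)^{1-1/q}\left(\frac{x-a}{b-a}\right)^{2(1-1/q)}$ respectively, matching the two blocks of the statement.

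For the remaining weighted integrals of $\left\vert f'\right\vert^{q}$ I invoke the $s$-convexity, $\left\vert f'\left(ta+(1-t)b\right)\right\vert^{q}\leq t^{s}\left\vert f'(a)\right\vert^{q}+(1-t)^{s}\left\vert f'(b)\right\vert^{q}$, which reduces everything to the four elementary integrals $\int t^{s+1}\,dt$, $\int t(1-t)^{s}\,dt$, $\int t^{s}(1-t)\,dt$, $\int(1-t)^{s+1}\,dt$ over the two subintervals. These are \emph{exactly} the integrals already evaluated in the proof of Theorem \ref{t.2.0}, so I can quote their values verbatim: in particular $\int_{0}^{\frac{b-x}{b-a}}t^{s+1}dt=\frac{1}{s+2}\left(\frac{b-x}{b-a}\right)^{s+2}$, $\int_{0}^{\frac{b-x}{b-a}}t(1-t)^{s}dt=\frac{1}{s+2}\left(\frac{x-a}{b-a}\right)^{s+2}-\frac{1}{s+1}\left(\frac{x-a}{b-a}\right)^{s+1}+\frac{1}{(s+1)(s+2)}$, together with the symmetric pair over $\left[\frac{b-x}{b-a},1\right]$.

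Assembling the two pieces then yields precisely the stated bound. I do not anticipate a serious obstacle; the one point requiring care is the \emph{correct pairing} of coefficients, namely matching $\left\vert f'(a)\right\vert^{q}$ with $\int t^{s+1}$ but $\left\vert f'(b)\right\vert^{q}$ with $\int t(1-t)^{s}$ in the first block, and the mirror-image pairing in the second, so that the bracketed expressions emerge in the exact form displayed. Checking that the power-mean step is legitimate — nonnegativity of the weights $t$ and $1-t$ on their respective subintervals, and $q\geq 1$ — is routine.
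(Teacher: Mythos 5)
Your proposal is correct and follows essentially the same route as the paper: the same splitting of the integral from Lemma \ref{l.1.1} at $t=\frac{b-x}{b-a}$, the same application of the power-mean (weighted H\"older) inequality with weights $t$ and $1-t$, and the same four elementary integrals already computed in the proof of Theorem \ref{t.2.0}, with the coefficient pairing matching the paper's. Nothing further is needed.
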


\begin{proof}
Suppose that $q\geq 1.$ From Lemma \ref{l.1.1} and using the well known
power mean inequality, we have 
\begin{eqnarray*}
&&\left\vert f(x)-\frac{1}{b-a}\int_{a}^{b}f(u)du\right\vert \\
&\leq &\left( b-a\right) \dint_{0}^{\frac{b-x}{b-a}}t\left\vert f^{\prime
}\left( ta+\left( 1-t\right) b\right) \right\vert dt \\
&&+\left( b-a\right) \dint_{\frac{b-x}{b-a}}^{1}\left\vert t-1\right\vert
\left\vert f^{\prime }\left( ta+\left( 1-t\right) b\right) \right\vert dt \\
&\leq &\left( b-a\right) \left( \dint_{0}^{\frac{b-x}{b-a}}tdt\right) ^{1-%
\frac{1}{q}}\left( \dint_{0}^{\frac{b-x}{b-a}}t\left\vert f^{\prime }\left(
ta+\left( 1-t\right) b\right) \right\vert ^{q}dt\right) ^{\frac{1}{q}} \\
&&+\left( b-a\right) \left( \dint_{\frac{b-x}{b-a}}^{1}\left( 1-t\right)
dt\right) ^{1-\frac{1}{q}}\left( \dint_{\frac{b-x}{b-a}}^{1}\left(
1-t\right) \left\vert f^{\prime }\left( ta+\left( 1-t\right) b\right)
\right\vert ^{q}dt\right) ^{\frac{1}{q}}.
\end{eqnarray*}%
Since $\left\vert f^{\prime }\right\vert ^{q}$ is $s-$convex, we have%
\begin{eqnarray*}
&&\int_{0}^{\frac{b-x}{b-a}}t\left\vert f^{\prime }\left( ta+\left(
1-t\right) b\right) \right\vert ^{q}dt \\
&\leq &\int_{0}^{\frac{b-x}{b-a}}t\left[ t^{s}\left\vert f^{\prime
}(a)\right\vert ^{q}+\left( 1-t\right) ^{s}\left\vert f^{\prime
}(b)\right\vert ^{q}\right] dt \\
&=&\frac{1}{s+2}\left( \frac{b-x}{b-a}\right) ^{s+2}\left\vert f^{\prime
}(a)\right\vert ^{q}+\left[ \frac{1}{s+2}\left( \frac{x-a}{b-a}\right)
^{s+2}\right. \\
&&\left. -\frac{1}{\left( s+1\right) }\left( \frac{x-a}{b-a}\right) ^{s+1}+%
\frac{1}{\left( s+1\right) \left( s+2\right) }\right] \left\vert f^{\prime
}(b)\right\vert ^{q}
\end{eqnarray*}%
and%
\begin{eqnarray*}
&&\int_{\frac{b-x}{b-a}}^{1}\left( 1-t\right) \left\vert f^{\prime }\left(
ta+\left( 1-t\right) b\right) \right\vert ^{q}dt \\
&\leq &\int_{\frac{b-x}{b-a}}^{1}\left( 1-t\right) \left[ t^{s}\left\vert
f^{\prime }(a)\right\vert ^{q}+\left( 1-t\right) ^{s}\left\vert f^{\prime
}(b)\right\vert ^{q}\right] dt \\
&=&\left( \frac{1}{\left( s+1\right) \left( s+2\right) }-\frac{1}{s+1}\left( 
\frac{b-x}{b-a}\right) ^{s+1}+\frac{1}{s+2}\left( \frac{b-x}{b-a}\right)
^{s+2}\right) \left\vert f^{\prime }(a)\right\vert ^{q} \\
&&+\frac{1}{s+2}\left( \frac{x-a}{b-a}\right) ^{s+2}\left\vert f^{\prime
}(b)\right\vert ^{q}.
\end{eqnarray*}%
Therefore we have%
\begin{eqnarray*}
&&\left\vert f(x)-\frac{1}{b-a}\int_{a}^{b}f(u)du\right\vert \\
&\leq &\left( b-a\right) \left( \frac{1}{2}\right) ^{1-\frac{1}{q}}\left\{
\left( \frac{b-x}{b-a}\right) ^{2(1-1/q)}\left[ \frac{1}{s+2}\left( \frac{b-x%
}{b-a}\right) ^{s+2}\left\vert f^{\prime }(a)\right\vert ^{q}\right. \right.
\\
&&\left. \left. +\left( \frac{1}{s+2}\left( \frac{x-a}{b-a}\right) ^{s+2}-%
\frac{1}{s+1}\left( \frac{x-a}{b-a}\right) ^{s+1}+\frac{1}{\left( s+1\right)
\left( s+2\right) }\right) \left\vert f^{\prime }(b)\right\vert ^{q}\right]
^{\frac{1}{q}}\right\} \\
&&+\left( b-a\right) \left( \frac{1}{2}\right) ^{1-\frac{1}{q}}\left\{
\left( \frac{x-a}{b-a}\right) ^{2(1-1/q)}\left[ \frac{1}{s+2}\left( \frac{x-a%
}{b-a}\right) ^{s+2}\left\vert f^{\prime }(b)\right\vert ^{q}\right. \right.
\\
&&\left. \left. +\left( \frac{1}{s+2}\left( \frac{b-x}{b-a}\right) ^{s+2}-%
\frac{1}{s+1}\left( \frac{b-x}{b-a}\right) ^{s+1}+\frac{1}{\left( s+1\right)
\left( s+2\right) }\right) \left\vert f^{\prime }(a)\right\vert ^{q}\right]
^{\frac{1}{q}}\right\}
\end{eqnarray*}%
which is required.
\end{proof}

\begin{corollary}
\label{c.2.3.} In Theorem \ref{t.2.2}, if we choose $x=\frac{a+b}{2}$ and $%
s=1,$ then we have%
\begin{eqnarray*}
&&\left\vert f(\frac{a+b}{2})-\frac{1}{b-a}\int_{a}^{b}f(u)du\right\vert \\
&\leq &\frac{b-a}{8}\left( \frac{1}{3}\right) ^{\frac{1}{q}}\left[ \left(
\left\vert f^{\prime }(a)\right\vert ^{q}+3\left\vert f^{\prime
}(b)\right\vert ^{q}\right) ^{\frac{1}{q}}+\left( 3\left\vert f^{\prime
}(a)\right\vert ^{q}+\left\vert f^{\prime }(b)\right\vert ^{q}\right) ^{%
\frac{1}{q}}\right] .
\end{eqnarray*}
\end{corollary}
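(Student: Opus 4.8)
The plan is to take the conclusion of Theorem \ref{t.2.2} as given and simply specialize it, carrying out the arithmetic that the substitution $x=\frac{a+b}{2}$, $s=1$ forces. The first observation is that at the midpoint both ratios appearing in that theorem collapse to the same value, $\frac{b-x}{b-a}=\frac{x-a}{b-a}=\frac12$. Consequently the two outer summands of Theorem \ref{t.2.2} become mirror images of one another under the interchange $a\leftrightarrow b$, so it suffices to evaluate one of them and write down the other by symmetry. Setting $s=1$ then replaces every $s+1$ by $2$ and every $s+2$ by $3$, and turns the powers $\left(\frac12\right)^{s+1},\left(\frac12\right)^{s+2}$ into $\frac14,\frac18$.

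Next I would evaluate the two inner brackets. In the first summand the coefficient of $\left\vert f'(a)\right\vert^{q}$ is $\frac{1}{s+2}\left(\frac12\right)^{s+2}=\frac13\cdot\frac18=\frac{1}{24}$, while the coefficient of $\left\vert f'(b)\right\vert^{q}$ is the three-term combination $\frac{1}{s+2}\left(\frac12\right)^{s+2}-\frac{1}{s+1}\left(\frac12\right)^{s+1}+\frac{1}{(s+1)(s+2)}=\frac{1}{24}-\frac18+\frac16=\frac{1}{12}$. Thus the first bracket equals $\frac{1}{24}\left\vert f'(a)\right\vert^{q}+\frac{1}{12}\left\vert f'(b)\right\vert^{q}$, and the second bracket is its mirror image $\frac{1}{24}\left\vert f'(b)\right\vert^{q}+\frac{1}{12}\left\vert f'(a)\right\vert^{q}$.

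The decisive bookkeeping is the handling of the outer constants through the $\frac1q$-power. Since both midpoint ratios equal $\frac12$, the prefactor $\left(\frac12\right)^{1-\frac1q}\left(\frac{b-x}{b-a}\right)^{2(1-1/q)}$ collapses to $\left(\frac12\right)^{3(1-1/q)}=\frac18\cdot 8^{1/q}$. Pulling the common factor $\frac{1}{24}$ out of each bracket contributes $\left(\frac{1}{24}\right)^{1/q}$, and the product $\frac18\cdot 8^{1/q}\cdot\left(\frac{1}{24}\right)^{1/q}=\frac18\left(\frac{8}{24}\right)^{1/q}=\frac18\left(\frac13\right)^{1/q}$ produces exactly the stated outer constant $\frac{b-a}{8}\left(\frac13\right)^{1/q}$. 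After factoring, each bracket becomes $\left\vert f'(a)\right\vert^{q}+2\left\vert f'(b)\right\vert^{q}$ (respectively its mirror), so the \emph{sharp} form of the bound carries the weight $2$. The displayed inequality, which carries the weight $3$, then follows at once from the crude monotonicity estimate $\frac{1}{12}\le\frac18$ (equivalently $2\le 3$), which only enlarges the right-hand side; this is the single place where the bound is not tight.

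I expect the main obstacle to be precisely this last arithmetic reconciliation. One must combine $\left(\frac12\right)^{1-1/q}$ with $\left(\frac12\right)^{2(1-1/q)}$ correctly — they add in the exponent to give $3(1-1/q)$, not $2(1-1/q)$ or $1-1/q$ — and then see that the $8^{1/q}$ coming from the prefactor cancels against the $24^{1/q}$ coming from the bracket to leave exactly $(1/3)^{1/q}$. The three-fraction sum $\frac{1}{24}-\frac18+\frac16$ must be evaluated over the common denominator $24$, since a slip here is exactly what would alter the reported inner weight. Beyond Theorem \ref{t.2.2} itself and the trivial estimate $2\le 3$, no further inequality is required.
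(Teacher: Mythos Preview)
Your argument is correct and is exactly the route the paper has in mind: the corollary carries no separate proof in the paper, so one simply substitutes $x=\frac{a+b}{2}$ and $s=1$ into the conclusion of Theorem~\ref{t.2.2} and simplifies. Your additional observation is also right: the direct substitution produces the coefficient $\tfrac{1}{12}$ (not $\tfrac{1}{8}$) in front of the ``large'' term, so the sharp inner weight is $2$ rather than the $3$ printed in the corollary; the crude step $2\le 3$ you invoke recovers the stated bound, and in fact shows that the constant in the corollary is slightly weaker than what Theorem~\ref{t.2.2} actually yields at the midpoint.
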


\section{Applications To Special Means}

Let $0<s<1$ and $u,v,w\in \mathbb{R}$. We define a function $f:\left[
0,\infty \right) \rightarrow \mathbb{R}$%
\begin{equation*}
f(t)=\left\{ 
\begin{array}{lll}
u & if & t=0 \\ 
&  &  \\ 
vt^{s}+w & if & t>0.%
\end{array}%
\right.
\end{equation*}%
If $v\geq 0$ and $0\leq w\leq u$, then $f\in K_{s}^{2}$ (see \cite{hudzik}).
Hence, for $u=w=0$, $v=1$, we have $f:\left[ 0,1\right] \rightarrow \left[
0,1\right] $, $f(t)=t^{s},$ $f\in K_{s}^{2}.$

As in \cite{SSO}, we shall consider the means for arbitrary positive real
numbers $a,b,$ $a\neq b.$ We take

(1) The arithmetic mean:%
\begin{equation*}
A=A(a,b):=\dfrac{a+b}{2},
\end{equation*}

(2) The logarithmic mean: 
\begin{equation*}
L=L\left( a,b\right) :=\left\{ 
\begin{array}{ccc}
a & if & a=b \\ 
&  &  \\ 
\frac{b-a}{\ln b-\ln a} & if & a\neq b%
\end{array}%
\right. \text{,}
\end{equation*}

(3) The $p-$logarithmic mean:

\begin{equation*}
L_{p}=L_{p}(a,b):=\left\{ 
\begin{array}{ccc}
\left[ \frac{b^{p+1}-a^{p+1}}{\left( p+1\right) \left( b-a\right) }\right] ^{%
\frac{1}{p}} & \text{if} & a\neq b \\ 
&  &  \\ 
a & \text{if} & a=b%
\end{array}%
\right. \text{, \ \ \ }p\in \mathbb{R\diagdown }\left\{ -1,0\right\} .
\end{equation*}%
It is well known \ that $L_{p}$ is monotonic nondecreasing \ over $p\in 
\mathbb{R}$ with $L_{-1}:=L$ and $L_{0}:=I.$ In particular, we have the
following inequality%
\begin{equation*}
L\leq A.
\end{equation*}%
Now, using the results of Section 2, we give some applications to special
means of positive real numbers.

\begin{proposition}
\label{p1} Let $0<a<b$ and $s\in \left( 0,1\right) .$ Then we have 
\begin{equation*}
\left\vert A^{s}\left( a,b\right) -L_{s}^{s}\left( a,b\right) \right\vert
\leq \left( b-a\right) \frac{s}{\left( s+1\right) \left( s+2\right) }\left(
1-\frac{1}{2^{s+1}}\right) \left[ a^{s-1}+b^{s-1}\right] .
\end{equation*}
\end{proposition}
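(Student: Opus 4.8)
The plan is to apply Corollary \ref{cor1} to the specific test function $f(t) = t^s$ on $[0,\infty)$. As noted at the beginning of this section, this $f$ lies in $K_s^2$, so it is $s$-convex in the second sense; moreover, since $0 < a < b$, it is differentiable on the interior of $[a,b]$ with $f'(t) = s t^{s-1}$, and hence $f' \in L[a,b]$. Thus all hypotheses of Theorem \ref{t.2.0}, and a fortiori of Corollary \ref{cor1}, hold for this $f$ on $[a,b]$.

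With this choice in place, I would evaluate each of the three ingredients appearing in the midpoint inequality (\ref{e.2.0}). First, $f\left(\frac{a+b}{2}\right) = \left(\frac{a+b}{2}\right)^s = A^s(a,b)$. Second, the integral mean is computed directly: $\frac{1}{b-a}\int_a^b u^s\,du = \frac{b^{s+1}-a^{s+1}}{(s+1)(b-a)}$, which is exactly $L_s^s(a,b)$ by the definition of the $s$-logarithmic mean recorded above. Third, because $a$ and $b$ are positive, $\left\vert f'(a)\right\vert = s\,a^{s-1}$ and $\left\vert f'(b)\right\vert = s\,b^{s-1}$, so the factor $\left[\left\vert f'(a)\right\vert + \left\vert f'(b)\right\vert\right]$ in (\ref{e.2.0}) equals $s\left(a^{s-1} + b^{s-1}\right)$.

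Substituting these three evaluations into the right-hand side of (\ref{e.2.0}) and pulling the common factor $s$ out of the bracket produces precisely the stated bound. I do not expect any genuine obstacle here: the proposition is a direct specialization of Corollary \ref{cor1}, and the only step requiring attention is matching the power integral $\int_a^b u^s\,du$ against the definition of $L_s^s(a,b)$ so that the factor $\frac{1}{s+1}$ lands in the correct place and the positivity of $a,b$ is used to drop the absolute values on $f'$.
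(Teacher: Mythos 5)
Your proposal is correct and follows essentially the same route as the paper, whose entire proof is the one-line remark that the inequality follows from (\ref{e.2.0}) applied to $f(x)=x^{s}$; your three evaluations (midpoint value $A^{s}(a,b)$, integral mean $L_{s}^{s}(a,b)$, derivative factor $s\left( a^{s-1}+b^{s-1}\right) $) are exactly the omitted details. The one point worth tightening---which the paper itself also glosses over---is that the hypothesis of Corollary \ref{cor1} concerns $\left\vert f^{\prime }\right\vert $ rather than $f$: instead of citing $f\in K_{s}^{2}$, you should observe that $\left\vert f^{\prime }(t)\right\vert =st^{s-1}$ is nonnegative and convex on $(0,\infty )$, and every nonnegative convex function is $s$-convex in the second sense (since $\alpha ^{s}\geq \alpha $ and $(1-\alpha )^{s}\geq 1-\alpha $ for $\alpha \in \lbrack 0,1]$), so the needed hypothesis does hold.
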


\begin{proof}
The inequality follows from (\ref{e.2.0}) applied to the $s-$convex function
in the second sense $f:\left[ 0,1\right] \rightarrow \left[ 0,1\right] ,$ $%
f(x)=x^{s}.$ The details are omitted.
\end{proof}

\begin{proposition}
\label{p2} Let $0<a<b$ and $s\in \left( 0,1\right) .$ Then we have 
\begin{eqnarray*}
&&\left\vert A^{s}\left( a,b\right) -L_{s}^{s}\left( a,b\right) \right\vert
\\
&\leq &s\frac{\left( b-a\right) }{4}\frac{1}{\left( p+1\right) ^{1/p}}\left[
\left( \frac{A^{q(s-1)}+b^{q(s-1)}}{s+1}\right) ^{1/q}+\left( \frac{%
a^{q(s-1)}+A^{q(s-1)}}{s+1}\right) ^{1/q}\right] .
\end{eqnarray*}
\end{proposition}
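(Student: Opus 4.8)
The plan is to apply Corollary \ref{c.2.2.} to the specific function $f(x)=x^{s}$ on $[a,b]$ (with the usual conjugate exponents $\frac{1}{p}+\frac{1}{q}=1$) and then to identify each ingredient of the resulting inequality with the special means in the statement. First I would record that $f(x)=x^{s}$ is $s$-convex in the second sense (the model example noted at the start of this section) and that $f^{\prime }(x)=sx^{s-1}$, so that $\left\vert f^{\prime }(x)\right\vert =sx^{s-1}$ for $x>0$.

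Next I would compute the left-hand side of Corollary \ref{c.2.2.}. Evaluating $f$ at the midpoint gives $f\!\left( \frac{a+b}{2}\right) =\left( \frac{a+b}{2}\right) ^{s}=A^{s}(a,b)$, while
\[
\frac{1}{b-a}\int_{a}^{b}x^{s}\,dx=\frac{b^{s+1}-a^{s+1}}{(s+1)(b-a)}=L_{s}^{s}(a,b)
\]
by the definition of the $p$-logarithmic mean (with exponent $s$). Hence the left-hand side is exactly $\left\vert A^{s}(a,b)-L_{s}^{s}(a,b)\right\vert $.

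For the right-hand side I would substitute $\left\vert f^{\prime }(a)\right\vert ^{q}=s^{q}a^{q(s-1)}$, $\left\vert f^{\prime }(b)\right\vert ^{q}=s^{q}b^{q(s-1)}$ and $\left\vert f^{\prime }\!\left( \frac{a+b}{2}\right) \right\vert ^{q}=s^{q}A^{q(s-1)}(a,b)$ into the two bracketed terms of Corollary \ref{c.2.2.}. The common factor $s^{q}$ can be pulled out of each numerator and, after taking $q$-th roots, contributes an overall factor $(s^{q})^{1/q}=s$; together with the constant $\frac{b-a}{4}(p+1)^{-1/p}$ this reproduces the claimed bound verbatim, with no further estimation required.

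The one point deserving care — and the step I expect to be the only (minor) obstacle — is checking that $\left\vert f^{\prime }\right\vert ^{q}$ is genuinely $s$-convex on $[a,b]$, as Corollary \ref{c.2.2.} requires. Here $\left\vert f^{\prime }(x)\right\vert ^{q}=s^{q}x^{q(s-1)}$, and since $q(s-1)<0$ the map $x\mapsto x^{q(s-1)}$ is nonnegative with positive second derivative on $(0,\infty )$, hence convex there. A nonnegative convex function is automatically $s$-convex in the second sense, because $\alpha ^{s}\geq \alpha $ and $(1-\alpha )^{s}\geq (1-\alpha )$ for $\alpha \in \lbrack 0,1]$ and $s\in (0,1]$; multiplying by the positive constant $s^{q}$ preserves $s$-convexity. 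This justifies the use of Corollary \ref{c.2.2.} and completes the argument.
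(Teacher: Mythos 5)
Your proposal is correct and follows exactly the paper's route: the authors prove Proposition \ref{p2} by applying Corollary \ref{c.2.2.} to $f(x)=x^{s}$ (the paper just says ``similar to Proposition \ref{p1}'' and omits all details). In fact you supply more than the paper does --- in particular the verification that $\left\vert f^{\prime }(x)\right\vert ^{q}=s^{q}x^{q(s-1)}$ is $s$-convex in the second sense, via the observation that a nonnegative convex function lies in $K_{s}^{2}$ since $\alpha \leq \alpha ^{s}$ on $[0,1]$ --- a hypothesis of Corollary \ref{c.2.2.} that the paper never checks.
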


\begin{proof}
The proof is similar to that of Proposition \ref{p1}, using Corollary \ref%
{c.2.2.}.
\end{proof}

\begin{proposition}
\label{p3} Let $0<a<b$ and $s\in \left( 0,1\right) .$ Then we have 
\begin{eqnarray*}
&&\left\vert A^{s}\left( a,b\right) -L_{s}^{s}\left( a,b\right) \right\vert
\\
&\leq &s\frac{\left( b-a\right) }{8}\left( \frac{2}{3}\right) ^{\frac{1}{q}%
}\left\{ \left[ A\left( a^{q(s-1)},3b^{q(s-1)}\right) \right] ^{1/q}+\left[
A\left( 3a^{q(s-1)},b^{q(s-1)}\right) \right] ^{1/q}\right\} .
\end{eqnarray*}
\end{proposition}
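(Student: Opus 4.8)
The plan is to specialize Corollary \ref{c.2.3.} to the function $f(x)=x^{s}$ on $[a,b]$, in exact parallel with the proofs of Propositions \ref{p1} and \ref{p2}, which instead used (\ref{e.2.0}) and Corollary \ref{c.2.2.}. First I would record the two identifications that turn the abstract left-hand side into the means: for $f(x)=x^{s}$ one has $f(\frac{a+b}{2})=A^{s}(a,b)$, while $\frac{1}{b-a}\int_{a}^{b}u^{s}\,du=\frac{b^{s+1}-a^{s+1}}{(s+1)(b-a)}=L_{s}^{s}(a,b)$; hence the left side of Corollary \ref{c.2.3.} is exactly $|A^{s}(a,b)-L_{s}^{s}(a,b)|$. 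I would also compute $f'(x)=sx^{s-1}$, so that $|f'(a)|^{q}=s^{q}a^{q(s-1)}$ and $|f'(b)|^{q}=s^{q}b^{q(s-1)}$.

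Before invoking the corollary I must check its hypothesis, and this is the one spot requiring genuine care because of a clash in the letter $s$. Corollary \ref{c.2.3.} is the specialization of Theorem \ref{t.2.2} to \emph{its} convexity parameter equal to $1$, so it demands that $|f'|^{q}$ be \emph{convex}, not merely that $f$ be $s$-convex. Here $|f'(x)|^{q}=s^{q}x^{q(s-1)}$, and since $0<s<1$ and $q>1$ force the exponent $q(s-1)<0$, the second derivative $q(s-1)\bigl(q(s-1)-1\bigr)x^{q(s-1)-2}$ is strictly positive on $(0,\infty)$. Thus $|f'|^{q}$ is convex on $[a,b]\subset(0,\infty)$ and $f'$ is continuous, so the corollary applies.

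Finally I would substitute and simplify; this routine algebra is the only real work, and the sole step worth watching is a regrouping of a power of $2$. Inserting the above values into Corollary \ref{c.2.3.} gives the bound $\frac{b-a}{8}(\frac{1}{3})^{1/q}\bigl[(s^{q}a^{q(s-1)}+3s^{q}b^{q(s-1)})^{1/q}+(3s^{q}a^{q(s-1)}+s^{q}b^{q(s-1)})^{1/q}\bigr]$. Factoring $s$ out of each bracket and writing $a^{q(s-1)}+3b^{q(s-1)}=2A(a^{q(s-1)},3b^{q(s-1)})$ together with its symmetric counterpart produces a factor $2^{1/q}$ in each summand; this $2^{1/q}$ merges with $(\frac{1}{3})^{1/q}$ to give the stated $(\frac{2}{3})^{1/q}$, and one arrives at precisely the claimed inequality. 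The main thing to verify carefully is thus that the $2^{1/q}$ emerging from the arithmetic-mean rewriting is exactly what upgrades $(1/3)^{1/q}$ to $(2/3)^{1/q}$.
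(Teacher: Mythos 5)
Your proposal is correct and follows exactly the paper's route: the paper proves Proposition \ref{p3} by applying Corollary \ref{c.2.3.} to $f(x)=x^{s}$, just as Proposition \ref{p1} applies (\ref{e.2.0}). Your careful points --- verifying that $\left\vert f^{\prime }(x)\right\vert ^{q}=s^{q}x^{q(s-1)}$ is genuinely convex (the $s=1$ hypothesis inherited from Theorem \ref{t.2.2}, despite the clash with the proposition's own $s$) and tracking the factor $2^{1/q}\left( \tfrac{1}{3}\right) ^{1/q}=\left( \tfrac{2}{3}\right) ^{1/q}$ from rewriting the sums as arithmetic means --- are precisely the details the paper leaves implicit in its one-line proof.
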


\begin{proof}
The proof is similar to that of Proposition \ref{p1}, using Corollary \ref%
{c.2.3.}.
\end{proof}

\section{The Midpoint Formula}

As in \cite{USKMEO} and \cite{PP}, let $d$ be a division $%
a=x_{0}<x_{1}<...<x_{n-1}<x_{n}=b$ of the interval $\left[ a,b\right] $ and
consider the quadrature formula 
\begin{equation}
\int_{a}^{b}f(x)dx=T\left( f,d\right) +E(f,d)  \label{e.4.1}
\end{equation}%
where%
\begin{equation*}
T\left( f,d\right) =\sum_{i=0}^{n-1}f\left( \frac{x_{i}+x_{i+1}}{2}\right)
\left( x_{i+1}-x_{i}\right)
\end{equation*}%
is the midpoint version and $E(f,d)$ denotes the associated approximation
error.

In the following, we propose some new estimates for midpoint formula.

\begin{proposition}
\label{p4} Let $f:I\subset \lbrack 0,\infty )\rightarrow \mathbb{R}$ be a
differentiable mapping on $I^{\circ }$ such that $f^{\prime }\in L\left[ a,b%
\right] $, where $a,b\in I$ with $a<b.$ If $\left\vert f^{\prime
}\right\vert ^{q}$ is convex on $\left[ a,b\right] $ and $p>1$, then in (\ref%
{e.4.1}), for every division $d$ of $\left[ a,b\right] $, the midpoint error
satisfy 
\begin{equation*}
\left\vert E(f,d)\right\vert \leq \frac{1}{4\left( p+1\right) ^{\frac{1}{p}}}%
\sum_{i=0}^{n-1}\left( x_{i+1}-x_{i}\right) ^{2}\left[ \left\vert f^{\prime
}(x_{i+1})\right\vert +\left\vert f^{\prime }(x_{i})\right\vert \right] .
\end{equation*}
\end{proposition}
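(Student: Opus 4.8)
The plan is to reduce the composite (global) error to a sum of single-subinterval midpoint errors, bound each one by a result already proved in Section~2, and then add up. First I would use the additivity of the integral over the division $d$ together with the definition of $T(f,d)$ to rewrite the error term in (\ref{e.4.1}) as
\[
E(f,d)=\int_a^b f(x)\,dx-T(f,d)=\sum_{i=0}^{n-1}\left[\int_{x_i}^{x_{i+1}}f(x)\,dx-f\!\left(\frac{x_i+x_{i+1}}{2}\right)(x_{i+1}-x_i)\right].
\]
Applying the triangle inequality then bounds $\left\vert E(f,d)\right\vert$ by the sum of the absolute values of the bracketed single-interval errors, so the whole problem collapses to estimating one generic term $\left\vert\int_c^d f-f(\frac{c+d}{2})(d-c)\right\vert$ with $[c,d]=[x_i,x_{i+1}]$.

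For each such term I would invoke the midpoint estimate already established in Section~2. Since $\left\vert f^{\prime }\right\vert ^{q}$ convex is precisely the $s=1$ instance of $s$-convexity in the second sense, Corollary~\ref{cor2} (inequality (\ref{E5})) applies on $[x_i,x_{i+1}]$ and yields
\[
\left\vert f\!\left(\frac{x_i+x_{i+1}}{2}\right)-\frac{1}{x_{i+1}-x_i}\int_{x_i}^{x_{i+1}}f(u)\,du\right\vert\le\frac{x_{i+1}-x_i}{4(p+1)^{1/p}}\bigl(\left\vert f^{\prime }(x_i)\right\vert+\left\vert f^{\prime }(x_{i+1})\right\vert\bigr).
\]
Multiplying both sides by $(x_{i+1}-x_i)$ converts the normalized average back into the genuine integral over the subinterval and produces exactly the $(x_{i+1}-x_i)^{2}$ weight appearing in the claim.

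Finally I would substitute this per-subinterval bound into the summed triangle inequality from the first step; the constant $\frac{1}{4(p+1)^{1/p}}$ factors out of the sum, leaving precisely $\frac{1}{4(p+1)^{1/p}}\sum_{i=0}^{n-1}(x_{i+1}-x_i)^{2}\bigl[\left\vert f^{\prime }(x_{i+1})\right\vert+\left\vert f^{\prime }(x_i)\right\vert\bigr]$, as required. I expect the only real obstacle to be matching the stated constant: the cruder H\"older estimates of Theorem~\ref{z} and Theorem~\ref{teo1} produce a strictly larger factor after coarse bounding, so the argument must rely on the sharper midpoint form (\ref{E5}) with $s=1$ in order to recover exactly $\frac{1}{4(p+1)^{1/p}}$. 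Everything else is the routine telescoping-and-summing step that is standard for composite quadrature error bounds.
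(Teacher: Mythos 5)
Your proof is correct and takes essentially the same route as the paper: decompose $E(f,d)$ over the subintervals of $d$, apply Corollary \ref{cor2} (inequality (\ref{E5})) on each $\left[ x_{i},x_{i+1}\right]$, and sum via the triangle inequality, which is exactly the paper's argument. One caveat you inherit from the paper itself: Corollary \ref{cor2} is stated under the extra hypothesis $f^{\prime }(a)=f^{\prime }\left( \frac{a+b}{2}\right) =f^{\prime }(b)$, which neither you nor the paper verifies on the subintervals, so strictly speaking both proofs use (\ref{E5}) beyond its stated scope.
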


\begin{proof}
On applying Corollary \ref{cor2} on the subinterval $\left[ x_{i},x_{i+1}%
\right] \left( i=0,1,...,n-1\right) $ of the division , we get 
\begin{equation*}
\left\vert \left( x_{i+1}-x_{i}\right) f\left( \frac{x_{i}+x_{i+1}}{2}%
\right) -\int_{x_{i}}^{x_{i+1}}f(x)dx\right\vert \leq \frac{\left(
x_{i+1}-x_{i}\right) ^{2}}{(p+1)^{1/p}}\left[ \frac{\left\vert f^{\prime
}(x_{i+1})\right\vert +\left\vert f^{\prime }(x_{i})\right\vert }{4}\right] .
\end{equation*}%
Summing over $i$ from $0$ to $n-1$ and taking into account that $\left\vert
f^{\prime }\right\vert $ is convex, we obtain, by the triangle inequality,
that 
\begin{eqnarray*}
&&\left\vert \int_{a}^{b}f(x)dx-T\left( f,d\right) \right\vert \\
&=&\left\vert \sum_{i=0}^{n-1}\left\{ \int_{x_{i}}^{x_{i+1}}f(x)dx-f\left( 
\frac{x_{i}+x_{i+1}}{2}\right) \left( x_{i+1}-x_{i}\right) \right\}
\right\vert \\
&\leq &\sum_{i=0}^{n-1}\left\vert \left\{
\int_{x_{i}}^{x_{i+1}}f(x)dx-f\left( \frac{x_{i}+x_{i+1}}{2}\right) \left(
x_{i+1}-x_{i}\right) \right\} \right\vert \\
&\leq &\frac{1}{4(p+1)^{1/p}}\sum_{i=0}^{n-1}\left( x_{i+1}-x_{i}\right) ^{2}%
\left[ \left\vert f^{\prime }(x_{i+1})\right\vert +\left\vert f^{\prime
}(x_{i})\right\vert \right]
\end{eqnarray*}%
which is completed the proof.
\end{proof}

\begin{proposition}
\label{p5} Let $f:I\subset \lbrack 0,\infty )\rightarrow \mathbb{R}$ be a
differentiable mapping on $I^{\circ }$ such that $f^{\prime }\in L\left[ a,b%
\right] $, where $a,b\in I$ with $a<b.$ If $\left\vert f^{\prime
}\right\vert ^{q}$ is convex on $\left[ a,b\right] $, then in (\ref{e.4.1}),
for every division $d$ of $\left[ a,b\right] $, the midpoint error satisfy 
\begin{equation*}
\left\vert E(f,d)\right\vert \leq \frac{1}{2\sqrt{6}}\sum_{i=0}^{n-1}\left(
x_{i+1}-x_{i}\right) ^{2}\left[ \left\vert f^{\prime }(x_{i+1})\right\vert
^{2}+\left\vert f^{\prime }(x_{i})\right\vert ^{2}\right] ^{1/2}.
\end{equation*}
\end{proposition}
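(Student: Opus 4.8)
The plan is to follow the template of the proof of Proposition~\ref{p4}, replacing the midpoint inequality supplied by Corollary~\ref{cor2} with the $L^{2}$-type midpoint inequality of Corollary~\ref{zz1}. Since $\left\vert f^{\prime }\right\vert ^{q}$ is convex on $\left[ a,b\right] $, which is exactly the $s=1$ instance underlying Corollary~\ref{zz1}, its restriction to each subinterval $\left[ x_{i},x_{i+1}\right] $ $\left( i=0,1,\ldots ,n-1\right) $ of the division $d$ is again convex, so Corollary~\ref{zz1} applies on each subinterval and yields
\begin{equation*}
\left\vert f\left( \frac{x_{i}+x_{i+1}}{2}\right) -\frac{1}{x_{i+1}-x_{i}}\int_{x_{i}}^{x_{i+1}}f(u)\,du\right\vert \leq \frac{x_{i+1}-x_{i}}{2}\left( \frac{\left\vert f^{\prime }(x_{i})\right\vert ^{2}+\left\vert f^{\prime }(x_{i+1})\right\vert ^{2}}{6}\right) ^{\frac{1}{2}}.
\end{equation*}

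Next I would multiply both sides by $\left( x_{i+1}-x_{i}\right) $ to clear the averaging factor and combine the constant $\frac{1}{2}\cdot \frac{1}{\sqrt{6}}=\frac{1}{2\sqrt{6}}$, obtaining
\begin{equation*}
\left\vert \left( x_{i+1}-x_{i}\right) f\left( \frac{x_{i}+x_{i+1}}{2}\right) -\int_{x_{i}}^{x_{i+1}}f(u)\,du\right\vert \leq \frac{\left( x_{i+1}-x_{i}\right) ^{2}}{2\sqrt{6}}\left[ \left\vert f^{\prime }(x_{i})\right\vert ^{2}+\left\vert f^{\prime }(x_{i+1})\right\vert ^{2}\right] ^{\frac{1}{2}}.
\end{equation*}
Summing these inequalities over $i$ from $0$ to $n-1$, recalling from (\ref{e.4.1}) that $E(f,d)=\int_{a}^{b}f(x)\,dx-T(f,d)=\sum_{i=0}^{n-1}\left\{ \int_{x_{i}}^{x_{i+1}}f(x)\,dx-\left( x_{i+1}-x_{i}\right) f\left( \frac{x_{i}+x_{i+1}}{2}\right) \right\} $, and applying the triangle inequality, I arrive at exactly the asserted bound on $\left\vert E(f,d)\right\vert $.

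The argument is routine once the correct corollary is identified, so I do not expect a genuine obstacle. The only point requiring care is the bookkeeping of constants: Corollary~\ref{zz1} records its midpoint estimate in the form $\frac{b-a}{2}\left( \frac{\left\vert f^{\prime }(a)\right\vert ^{2}+\left\vert f^{\prime }(b)\right\vert ^{2}}{6}\right) ^{\frac{1}{2}}$, and one must verify that extracting $\frac{1}{6}$ from the square root and merging it with the prefactor $\frac{1}{2}$ reproduces both the coefficient $\frac{1}{2\sqrt{6}}$ and the square-root-of-sum-of-squares structure appearing in the claim. Because the hypothesis is precisely the convex ($s=1$) case on which Corollary~\ref{zz1} rests, no residual $s$- or $p$-dependence survives, and the per-subinterval estimates sum cleanly to the stated inequality.
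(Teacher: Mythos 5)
Your proof is correct and is exactly the argument the paper intends: apply Corollary \ref{zz1} (with $q=2$, $s=1$) on each subinterval $\left[ x_{i},x_{i+1}\right] $, clear the averaging factor, sum over $i$, and use the triangle inequality, with the constant bookkeeping $\frac{1}{2}\cdot\frac{1}{\sqrt{6}}=\frac{1}{2\sqrt{6}}$ checked correctly. The paper's own proof is just the remark that one uses Corollary \ref{zz1} as in Proposition \ref{p4}, which is precisely what you carried out.
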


\begin{proof}
The proof uses Corollary \ref{zz1} and is similar to that of Proposition \ref%
{p4}.
\end{proof}

\begin{proposition}
\label{p6} Let $f:I\subset \lbrack 0,\infty )\rightarrow \mathbb{R}$ be a
differentiable mapping on $I^{\circ }$ such that $f^{\prime }\in L\left[ a,b%
\right] $, where $a,b\in I$ with $a<b.$ If $\left\vert f^{\prime
}\right\vert ^{q}$ is convex on $\left[ a,b\right] $ and $q\geq 1$, then in (%
\ref{e.4.1}), for every division $d$ of $\left[ a,b\right] $, the midpoint
error satisfy 
\begin{multline*}
\left\vert E(f,d)\right\vert \\
\leq \frac{1}{8}\left( \frac{1}{3}\right) ^{\frac{1}{q}}\sum_{i=0}^{n-1}%
\left( x_{i+1}-x_{i}\right) ^{2}\left[ \left( \left\vert f^{\prime
}(x_{i})\right\vert ^{q}+3\left\vert f^{\prime }(x_{i+1})\right\vert
^{q}\right) ^{\frac{1}{q}}+\left( 3\left\vert f^{\prime }(x_{i})\right\vert
^{q}+\left\vert f^{\prime }(x_{i+1})\right\vert ^{q}\right) ^{\frac{1}{q}}%
\right] .
\end{multline*}
\end{proposition}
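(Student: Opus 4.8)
The plan is to reduce the global error bound to the local midpoint estimate supplied by Corollary \ref{c.2.3.}, exactly as in the proofs of Propositions \ref{p4} and \ref{p5}. The one preliminary observation is that the hypothesis here, that $\left\vert f^{\prime}\right\vert ^{q}$ is convex, is precisely the $s=1$ instance of $s$-convexity in the second sense, so the $s=1$ specialization recorded in Corollary \ref{c.2.3.} applies verbatim.

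First I would apply Corollary \ref{c.2.3.} on a typical subinterval $\left[ x_{i},x_{i+1}\right]$ of the division $d$, replacing the role of $a,b$ there by $x_{i},x_{i+1}$. This produces, for each $i=0,1,\ldots ,n-1$, the local midpoint inequality
\begin{equation*}
\left\vert f\left( \tfrac{x_{i}+x_{i+1}}{2}\right) -\tfrac{1}{x_{i+1}-x_{i}}\int_{x_{i}}^{x_{i+1}}f(u)du\right\vert \leq \tfrac{x_{i+1}-x_{i}}{8}\left( \tfrac{1}{3}\right) ^{\frac{1}{q}}\left[ \left( \left\vert f^{\prime }(x_{i})\right\vert ^{q}+3\left\vert f^{\prime }(x_{i+1})\right\vert ^{q}\right) ^{\frac{1}{q}}+\left( 3\left\vert f^{\prime }(x_{i})\right\vert ^{q}+\left\vert f^{\prime }(x_{i+1})\right\vert ^{q}\right) ^{\frac{1}{q}}\right] .
\end{equation*}
Multiplying through by $\left( x_{i+1}-x_{i}\right)$ converts this into a bound on the integral form, namely on $\left\vert \int_{x_{i}}^{x_{i+1}}f(u)du-f\left( \tfrac{x_{i}+x_{i+1}}{2}\right) \left( x_{i+1}-x_{i}\right) \right\vert$, with a factor $\left( x_{i+1}-x_{i}\right) ^{2}$ on the right.

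I would then sum these $n$ inequalities over $i$ from $0$ to $n-1$. Using $\int_{a}^{b}f=\sum_{i}\int_{x_{i}}^{x_{i+1}}f$ and the definition $T\left( f,d\right) =\sum_{i}f\left( \tfrac{x_{i}+x_{i+1}}{2}\right) \left( x_{i+1}-x_{i}\right)$, the triangle inequality gives
\begin{equation*}
\left\vert E(f,d)\right\vert =\left\vert \int_{a}^{b}f(x)dx-T\left( f,d\right) \right\vert \leq \sum_{i=0}^{n-1}\left\vert \int_{x_{i}}^{x_{i+1}}f(u)du-f\left( \tfrac{x_{i}+x_{i+1}}{2}\right) \left( x_{i+1}-x_{i}\right) \right\vert ,
\end{equation*}
and inserting the per-subinterval bound yields precisely the claimed estimate. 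There is no genuine obstacle: the content is entirely in Corollary \ref{c.2.3.}, and the passage from one subinterval to the whole division is the routine telescoping-plus-triangle-inequality argument already used for Propositions \ref{p4} and \ref{p5}. The only point requiring a word of care is to verify that the convexity hypothesis matches the $s=1$ case of the corollary, which I would state explicitly at the outset.
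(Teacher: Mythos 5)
Your proposal is correct and follows the paper's own route exactly: the paper proves Proposition \ref{p6} by applying Corollary \ref{c.2.3.} on each subinterval $\left[ x_{i},x_{i+1}\right] $ and then summing with the triangle inequality, precisely as in the proof of Proposition \ref{p4}. Your explicit remark that convexity of $\left\vert f^{\prime }\right\vert ^{q}$ is the $s=1$ case of $s$-convexity in the second sense is a sound (and in the paper implicit) point of care.
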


\begin{proof}
The proof uses Corollary \ref{c.2.3.} and is similar to that of Proposition %
\ref{p4}.
\end{proof}

\end{document}